\newcommand\mb{\mathbb}
\newcommand\mc{\mathcal}
\newcommand \Pe{\mc P}
\newcommand\wt{\widetilde}
\newcommand\lf{\bs}
\newcommand\bs{\backslash}
\newcommand\PSL{PSL_2(\Z)}
\newcommand\E{\mathcal E}
\newcommand\La{\Lambda}
\newcommand\GH{\Gamma}
\newcommand \PG{P\Gamma}
\newcommand\D{\mc D}
\newcommand\Hp{\mb H}
\newcommand\PAut{P\text{Aut}}
\newcommand\F{\mb F}
\newcommand\M{\mc M}
\newcommand\Aut{\text{Aut}}
\newcommand\Z{\mb Z}
\newcommand\C{\mb C}
\newcommand\tth{\theta}
\newcommand\w{\omega}
\newcommand\G{{G}}
\DeclareMathOperator{\disc}{disc}
\numberwithin{equation}{section}
\journalname{Geometriae Dedicata}
\begin{document}

\title{Periods of cubic surfaces with the automorphism group of order 54}

\titlerunning{Periods of cubic surfaces with the automorphism group of order 54}       

\author{Vasily Bolbachan}

\institute{Faculty of Mathematics, National Research University Higher School of Economics, 6 Usacheva Street, Moscow, Russia 119048\\
              \email{vbolbachan@gmail.com} 
              }

\date{}

\maketitle

\begin{abstract}
To any cubic surface, one can associate a cubic threefold given by a triple cover of $\mb P^3$ branched in this cubic surface. D. Allcock, J. Carlson, and D. Toledo used this construction to define the period map for cubic surfaces. It is interesting to calculate this map for some specific cubic surfaces. In this paper, we have calculated it in the case when the cubic surface is given by a triple covering of $\mb P^2$ branched in a smooth elliptic curve. In this case, the periods can be expressed through periods of the corresponding elliptic curve.
\keywords{Hermitian lattices\and Cubic surfaces\and Moduli spaces}
\subclass{32G13, 57S30, 14J26}
\end{abstract}

\section{Introduction}
\label{intro}
Let $S$ be a smooth cubic surface in $\mb P^3$. We can associate to it a cubic threefold $T=T(S)$ given by the triple cover of $\mb P^3$ branched at $S$. If the cubic surface $S$ is given as $\{(x,y,z,s)\in\mb P^3|g(x,y,z,s)=0\}$ then the equation of $T$ is $t^3+g(x,y,z,s)=0$. Let us denote by $\sigma$ the automorphism of $T$ given by the formula $(t,x,y,z,s)\mapsto (\w t,x,y,z,s)$, where $\omega=e^{2\pi i/3}$. 

The third complex cohomology of $T$ has a hermitian form $h'$ given by the following formula:
$$(\varphi,\psi)=\theta \int_{T}\varphi\wedge \overline \psi.$$

Here $\theta=\w-\w^2$. Since the third cohomology of $\mb P^3=T/\sigma$ is trivial, we can decompose $H^3(T,\mb C)$ with respect to the eigenvalues of $\sigma$: $H^3(T,\mb C)=H^3_\w(T,\mb C)\oplus H^3_{\w^2}(T,\mb C)$. It is easy to see that this decomposition is orthogonal with respect to $h'$. 

We recall that {\it the ring of Eisenstein numbers} $\E\subset \mathbb C$ is defined as a subring generated by $1$ and $\w$ over $\Z$. {\it A hermitian lattice} (over $\E$) is a finitely generated free $\E$-module together with an $\E$-valued hermitian form. In \cite[Section 2.3, Lemma 4.1]{allcock2002complex} was constructed a structure of hermitian lattice on $H^3(T,\Z)$ and was proved the following isomorphism of hermitian spaces: $(H^3(T,\Z)\otimes_\E\C,h)\simeq(H^3_{w^2}(T,\C),h')$. The hermitian lattice $H^3(T,\Z)$ is isomorphic to the standard unimodular lattice of signature $(4,1)$ called $\La$. It is a free $5$-dimensional module over $\E$ with the following hermitian form:
$$(x,y)_\La=-x_1\bar y_1+\sum\limits_{i=2}^5 x_i\bar y_i.$$

A subspace of a hermitian vector space is {\it negative} if the restriction of the hermitian form on it is negative definite. For a hermitian lattice of signature $(4,1)$ let us denote by $\D(R)$ the set of all $1$-dimensional negative subspaces and set $\mc D=\mc D(\La)$. One can prove that the subspace $H^{2,1}_{\w^2}(T,\C)\subset H^3_{\w^2}(T,\C)$ is negative and one-dimensional (see \cite[Lemma 2.6]{allcock2002complex}). In particular for any smooth cubic surface $S$, we have the canonical element $\lambda(S)$ in the space $\D(H^3(T,\Z))$.

{\it A framed cubic surface} is a pair $(S,r)$ consisting of a smooth cubic surface $S$ and an isomorphism of hermitian lattices $r\colon H^3(T,\Z)\to\Lambda$. By a convention, two isomorphisms differing by a unity of $\E$ give the same framing. Let us denote by $\wt\M$ the set of all framed cubic surfaces up to isomorphism. It has the natural structure of a smooth complex manifold. Let us denote by $\mc P'(S,r)\in \D$ the image of the element $\lambda(S)$ under the isomorphism $\mc D(H^3(T,\Z))\to\D$. It gives a well-defined map $\mc P'\colon\wt\M\to\D$  called {\it the Period map}.

For a vector $v\in \La$, denote by $\delta_v\subset \D$ the set of points orthogonal to $v$ under the hermitian form. Let $\mc H=\bigcup\limits_{N(v)=1}\delta_v$ and $\D'=\D\bs \mc H$. 
According to the first part of the Theorem 2.20 from \cite{allcock2002complex}, the image of the map $\mc P'$ is equal to $\D'$ and the map $\mc P'\colon \wt \M\to \D'$ is an isomorphism of complex manifolds.

Denote by $\Gamma$ the group consisting of automorphisms of the lattice $\La$ preserving the hermitian form and put $\PG=\Gamma/\mu_6$. The group $\PG$ acts on the space $\wt\M$ and the factor $\M:=\PG\bs\wt\M$ is isomorphic to the moduli space of smooth cubic surfaces. The map $\mc P'$ induces an isomorphism of complex analytic orbifolds $\mc P\colon \M\to \PG\bs\D'$. For a point $z\in \D'$ denote by $S_z$ the corresponding cubic surface.
\section{Statement of the main result}
The description of the groups arising as the automorphism group of a smooth cubic surface is well known (see \cite{dolgachev2012classical} and \cite{hosoh1997automorphism}). There are $12$ such groups, in total. In this list, there is a unique group of order $54$. Let us denote it by $\G$.
The group $\G$ can be realised as a subgroup of $GL_3(\E)$ generated by the permutation matrices and the diagonal matrix $(1,\w,\w^2)$. This group is known as the complex reflection group $G(3,3,3)$. 

Let us denote by $D_3$ the following hermitian lattice:
$$D_3=\{(x_1,x_2,x_3)\in\E^{\oplus 3}|x_1+x_2+x_3=0\mod \tth\}.$$
Let $R$ be a hermitian lattice. To any vector $v\in R$ of norm $2$ one can associate the reflection $\sigma_v$ in this vector. The group generated by these reflections is called {the Weyl group} of the lattice $R$ and is denoted by $W(R)$. In the next section we will prove that the group $W(D_3)$ is isomorphic to $\G$. In particular, if for some embedding $f\colon D_3\to \La$, the point $z\in\D$ is orthogonal to any vector of $f(D_3)$ then the stabilizer of the point $z$ in $\PG$ contains a group isomorphic to $\G$. This motivates the following statement:

\begin{proposition}
\label{prop:characterisation}
Let $z\in \D$. The following conditions are equivalent:
\begin{enumerate}
\item The point $z$ lies in $\D'$ and the stabilizer of this point in $\PG$ contains a subgroup isomorphic to $\G$.
    \item There is a primitive embedding $f\colon D_3\to \La$ such that for any $v\in f(D_3)$ we have $(z,v)_\La=0$.
\end{enumerate}
\end{proposition}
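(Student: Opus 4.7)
I will argue each direction from an invariant/coinvariant decomposition of $\La$ with respect to the prospective $\G$-action, relying on the identification $W(D_3) \simeq \G$ (proved in the next section).

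For $(2) \Rightarrow (1)$, given a primitive embedding $f \colon D_3 \hookrightarrow \La$ with $z \perp f(D_3)$, I extend the action of $W(D_3)$ on $D_3$ to an action on $\La$ by letting each $2$-reflection $\sigma_v(x) = x - (x,v)v$ in a norm-$2$ vector $v \in f(D_3)$ act trivially on $f(D_3)^\perp$. Since $(x,v) \in \E$ for all $x \in \La$, the formula preserves $\La$, and the resulting $\G = W(D_3) \hookrightarrow \Gamma$ remains injective after passing to $\PG$ (any image element in $\mu_6$ would act as the identity on $f(D_3)^\perp$, forcing the scalar to be $1$). The point $z$ lies in $f(D_3)^\perp \otimes \C$ and so is fixed by this $\G$, giving the stabilizer half of $(1)$. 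It remains to prove $z \in \D'$: suppose $v \in \La$ satisfies $N(v) = 1$ and $v \perp z$, and decompose $v = v_\parallel + v_\perp$ via the rational orthogonal splitting $\La \otimes \Q(\w) = (f(D_3) \otimes \Q(\w)) \oplus (f(D_3)^\perp \otimes \Q(\w))$. Then $v_\perp \perp z$, and since $f(D_3)^\perp \otimes \C$ has signature $(1,1)$ with $z$ spanning its negative line, $v_\perp$ lies on the complementary positive line, so $N(v_\perp) \geq 0$; together with $N(v_\parallel) \geq 0$ (positive-definiteness of $D_3$) and $N(v_\parallel) + N(v_\perp) = 1$ this bounds both norms in $[0,1]$. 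The identity $(v_\perp, u) = (v, u) \in \E$ for $u \in f(D_3)^\perp$ places $v_\perp$ in the dual lattice $(f(D_3)^\perp)^*$, and a calculation with the discriminant form of $D_3$ (the group $(\Z/3)^2$ with vanishing $\Q/\Z$-valued norm form) yields $N(v_\perp) \in \Z$. Hence $N(v_\perp) \in \{0,1\}$, so one of $v_\perp, v_\parallel$ is zero, and saturation of $f(D_3)$ and $f(D_3)^\perp$ forces $v \in f(D_3)$ or $v \in f(D_3)^\perp$. But $D_3$ has no norm-$1$ vectors (a single unit coordinate would violate $x_1 + x_2 + x_3 \equiv 0 \pmod \tth$), and an explicit Gram-matrix computation shows $f(D_3)^\perp$, as a rank-$2$ signature-$(1,1)$ hermitian lattice of discriminant $3$, has all norms in $3\Z$; either alternative contradicts $N(v) = 1$.

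For $(1) \Rightarrow (2)$, lift a copy $\G' \simeq \G$ of the stabilizer of $z$ in $\PG$ to $\Gamma$ and let $L$ be the $\E$-sublattice of $\La$ spanned by the norm-$2$ vectors $v$ with $\sigma_v \in \G'$. Because $\G' \simeq W(D_3)$ is generated by such reflections, $L$ realises the reflection action of $\G(3,3,3)$ on its root lattice, so $L \simeq D_3$ isometrically; and because $z$ is fixed by each $\sigma_v$, every generating root is orthogonal to $z$, whence $z \perp L$. Verifying that $L$ coincides with the coinvariant sublattice $(\La^{\G'})^\perp$ (and in particular that the latter has rank exactly $3$) shows $L$ is primitive in $\La$ and supplies the required embedding $f \colon D_3 \hookrightarrow \La$.

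The principal obstacle is this last identification in $(1) \Rightarrow (2)$: one must rule out the possibility that $\La \otimes \C$ carries a nontrivial $\G$-representation summand beyond the reflection representation (which would enlarge the coinvariant lattice beyond $D_3$), and then show that the spanning root sublattice $L$ is already saturated in $\La$. Both rest on a representation-theoretic analysis of faithful $\G(3,3,3)$-actions on hermitian $\E$-lattices of rank at most $5$, constrained by the unimodularity of $\La$. The integrality lemma $N(v_\perp) \in \Z$ in the first direction is likewise technical but becomes routine once the discriminant form of $D_3$ is computed.
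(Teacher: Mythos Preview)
Your argument for $(2)\Rightarrow(1)$ is correct and is a genuinely different route from the paper's. The paper assumes a norm-$1$ vector $v_4\perp z$, forms the rank-$4$ lattice $L'=L+\E v_4$, and computes the $4\times 4$ Gram determinant explicitly as $3-Q(a,a)$ for an auxiliary positive-definite form $Q$ with entries in $\theta\E$, forcing the inner products $(v_i,v_4)$ to vanish and hence $v_4\in L^\perp\simeq H$. Your decomposition $v=v_\parallel+v_\perp$ together with the observation that $v_\perp\in (f(D_3)^\perp)^\vee$ and that the discriminant form of $H$ has vanishing norms is cleaner and more conceptual; both arguments terminate at the same endgame (no norm-$1$ vectors in $D_3$ or in $H$).

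Your $(1)\Rightarrow(2)$, however, has a real gap that is not merely the saturation/coinvariant issue you flag at the end. You write ``Because $\G'\simeq W(D_3)$ is generated by such reflections, $L$ realises the reflection action of $\G(3,3,3)$ on its root lattice, so $L\simeq D_3$.'' But nothing you have said guarantees that the abstract embedding $\G'\hookrightarrow\GH$ sends the reflection generators of $\G$ to reflections $\sigma_v$ in norm-$2$ vectors of $\La$; a priori $L$ could be zero. The paper obtains this from geometry: via the Torelli isomorphism $Stab_{\PG}(z)\simeq\Aut(S_z)$, Hosoh's classification shows $\G\subset\Aut(S_z)$ is generated by three harmonic homologies (class $4A_1$ in $W(E_6)$), and Lemma~11.4 of \cite{allcock2002complex} identifies harmonic homologies with $2$-reflections in $\La$. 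Even after one knows $\G'$ is generated by three $2$-reflections $\sigma_{v_1},\sigma_{v_2},\sigma_{v_3}$, the paper still needs a case analysis on the Gram matrix: the braid relations force $(v_i,v_j)\in\E^\times$, and after normalization $(v_1,v_3)\in\{1,-1,\w,-\w\}$; the case $\w$ is excluded only because $z\in\D'$, and the cases $\pm 1$ are excluded only because the resulting reflection group is too small. Your sketch skips both the geometric input and this case analysis, and the proposed ``representation-theoretic analysis of faithful $\G(3,3,3)$-actions'' would have to reproduce all of it.
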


As we said above, the map $\mc P\colon \M\to\PG \lf\D'$ is an isomorphism of complex analytic orbifolds. In particular, the stabilizer of the point $z$ in $\PG$ is isomorphic to the automorphism group of the cubic surface $S_z$. This means that this proposition has the following corollary:

\begin{corollary}
\label{cor:characterisation}
The automorphism group of a smooth cubic surface $S$ has a subgroup isomorphic to $\G$ if and only if the point $\mc P(S)$ lies in the orthogonal complement to the image of the lattice $D_3$ under some primitive embedding $D_3\to\La$.
\end{corollary}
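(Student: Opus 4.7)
The plan is to deduce the corollary as an essentially direct translation of Proposition \ref{prop:characterisation} through the orbifold isomorphism $\mc P\colon \M\to \PG\lf\D'$ established at the end of the introduction.

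First I would pick a representative. Given a smooth cubic surface $S$, let $z\in\D'$ be any point whose $\PG$-orbit represents $\mc P(S)$. Because $\mc P$ is an isomorphism of complex analytic orbifolds, it identifies the isotropy group of $S$ in $\M$ with the isotropy group of $z$ in $\PG\lf\D'$; the former is $\Aut(S)$ and the latter is the stabilizer $\Gamma_z\subset\PG$. Consequently, $\Aut(S)$ contains a subgroup isomorphic to $\G$ if and only if $\Gamma_z$ does. Since $z\in\D'$ by construction, condition (1) of Proposition \ref{prop:characterisation} reduces precisely to the statement that $\Gamma_z$ contains a subgroup isomorphic to $\G$.

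Next I would apply Proposition \ref{prop:characterisation} to $z$: the existence of a subgroup $\G\subset\Gamma_z$ is equivalent to the existence of a primitive embedding $f\colon D_3\to\La$ such that $(z,v)_\La=0$ for every $v\in f(D_3)$, i.e.\ such that $z$ lies in the orthogonal complement $f(D_3)^\perp$. Combining this with the identification of isotropy groups in the previous step gives the desired equivalence for the chosen representative $z$.

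The only thing left to check is that the condition ``$\mc P(S)$ lies in the orthogonal complement to the image of $D_3$ under some primitive embedding $D_3\to\La$'' is well defined on $\PG$-orbits, so that it makes sense as a condition on $\mc P(S)\in\PG\lf\D'$ rather than only on a chosen lift. This is immediate: for any $g\in\PG$, if $z\perp f(D_3)$ then $g\cdot z\perp (g\circ f)(D_3)$, and $g\circ f$ is again a primitive embedding since $g$ is a hermitian lattice automorphism of $\La$. Since the whole argument is a straightforward chain of equivalences, there is no real obstacle beyond this routine well-definedness check; all substantive content has been packaged into Proposition \ref{prop:characterisation} and the orbifold isomorphism property of $\mc P$.
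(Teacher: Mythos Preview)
Your argument is correct and matches the paper's own reasoning: the paper derives the corollary directly from Proposition~\ref{prop:characterisation} via the orbifold isomorphism $\mc P\colon \M\to\PG\backslash\D'$, which identifies $\Aut(S_z)$ with the stabilizer of $z$ in $\PG$. The only addition in your write-up is the explicit $\PG$-invariance check, which the paper leaves implicit.
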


We will prove in the next section that up to the action of the group $\PG$ there is a unique primitive embedding $D_3\to\La$. For some specific choice of such an embedding its image is generated by the following vectors: $w_1=(-1,\w,-1,0,-1), w_2=(0,0,1,-1,0),w_3=(0,1,-1,0,0)$. We denote the lattice generated by these vectors by $L_0$. Let $\Hp$ be the upper half-plane. Since the lattice $D_3\simeq L_0$ is positive-definite, its orthogonal complement has signature $(1,1)$. So the orthogonal complement $(L_0)^{\perp}_{\mc D}$ to $L_0$ in $\D$ is isomorphic to $\Hp$. The explicit isomorphism between $\Hp$ and $(L_0)^{\perp}_{\mc D}$ is given by the following formula: $j_0(\tau)=\tau v_1+v_2$, where $v_1=(-\tth,\w^2,\w^2,\w^2,0), v_2=(1,0,0,0,1)$. 

Denote by $\Hp'$ the complement to the orbits of the points $i$ and $\omega$ in $\Hp$ and denote by $\D'_\G$ the set of points in $\D'$ whose stabilizer is isomorphic to $\G$. The following theorem gives the modular description of the moduli space of smooth cubic surfaces with the automorphism group of order $54$:
\begin{theorem}
\label{th:about_bijection}
The map $j_0$ induces a bijection
$$\bar j_0\colon PSL_2(\Z)\lf\Hp'\to \PG\lf\D'_\G.$$
\end{theorem}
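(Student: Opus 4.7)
The plan is to view $j_0$ as an explicit $PSL_2(\Z)$-equivariant parametrization of the locus in $\D$ orthogonal to $L_0$, and to transfer the desired bijection from $PSL_2(\Z)\bs\Hp'$ to $\PG\bs\D'_\G$ using Proposition~\ref{prop:characterisation}, the uniqueness (up to $\PG$) of the primitive embedding $D_3\hookrightarrow\La$ stated just before the theorem, and a structural description of the stabilizer of $L_0$ in $\PG$. The proof divides naturally into three tasks: establishing (i) that $j_0$ sends $\Hp'$ into $\D'_\G$, (ii) surjectivity onto $\PG\bs\D'_\G$, and (iii) injectivity.

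The key algebraic step underpinning (i) and (iii) is the identification of the effective action on $(L_0)^\perp_\D\simeq\Hp$ of the setwise stabilizer $N:=\{g\in\PG\mid g(L_0)=L_0\}$ with $PSL_2(\Z)$, with kernel equal to $W(L_0)\cong\G$. A short computation using the given generators gives $(v_1,v_1)_\La=(v_2,v_2)_\La=0$ and $(v_1,v_2)_\La=\tth$, so $L_0^\perp$ is the rank-$2$ hyperbolic $\E$-plane, and $(j_0(\tau),j_0(\tau))_\La=\tth(\tau-\bar\tau)=-2\sqrt{3}\,\mathrm{Im}(\tau)<0$; thus $j_0$ is a bijection $\Hp\to(L_0)^\perp_\D$. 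Reflections in norm-$2$ vectors of $L_0$ fix $L_0^\perp$ pointwise, so $W(L_0)$ lies in the kernel. Conversely, writing a generic automorphism of $L_0^\perp$ as a $2\times 2$ matrix over $\E$ preserving the hyperbolic Gram form (vanishing diagonal, off-diagonal entries $\pm\tth$) and reducing modulo the scalar group $\mu_6$ yields $SL_2(\Z)/\{\pm 1\}$ acting on the $\tau$-parameter by M\"obius transformations; a standard primitive-extension argument for hermitian lattices then lifts each such automorphism to an element of $\PG$ preserving $L_0$. This identification is the main technical obstacle of the proof.

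With the key step in hand, the three tasks are assembled as follows. For (i), Proposition~\ref{prop:characterisation} gives $\G\subset\mathrm{Stab}_\PG(j_0(\tau))$; conversely, any element of this stabilizer conjugates $W(L_0)$ into another copy of itself in $\GH$, and since $L_0$ is the $\E$-span of the norm-$2$ roots of $W(L_0)$ this forces preservation of $L_0$, so the stabilizer lies in $N$ and by the key step equals $\G$ precisely when $\tau\in\Hp'$; the same reasoning shows $j_0(\tau)\notin\mc H$, hence $j_0(\Hp')\subset\D'_\G$. For (ii), any $z\in\D'_\G$ is orthogonal to a primitive $D_3\hookrightarrow\La$ by Corollary~\ref{cor:characterisation}, and by the uniqueness of such embeddings up to $\PG$ one moves $z$ by a $\PG$-element into $(L_0)^\perp_\D=j_0(\Hp)$, and (i) forces the resulting $\tau$ to lie in $\Hp'$. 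For (iii), if $g\cdot j_0(\tau_1)=j_0(\tau_2)$ with $\tau_i\in\Hp'$, the same normalizer argument yields $g\in N$, and the key step identifies the induced transformation with an element of $PSL_2(\Z)$ sending $\tau_1$ to $\tau_2$.
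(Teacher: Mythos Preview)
Your overall architecture matches the paper's: factor through the setwise stabilizer $N$ of $L_0$ in $\PG$, identify $N/W(L_0)$ with $PSL_2(\Z)$ (this is Proposition~\ref{prop:normalizer}), and use Proposition~\ref{prop:characterisation} together with transitivity on primitive embeddings (Proposition~\ref{prop:primitive_embedding}) for surjectivity. Your tasks (ii) and (iii) are essentially the paper's arguments (the latter is Lemma~\ref{lemma:connectedness}(1)).

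The gap is in task (i), specifically in the step ``any element of this stabilizer conjugates $W(L_0)$ into another copy of itself \dots\ this forces preservation of $L_0$''. Conjugation by $g\in\mathrm{Stab}_{\PG}(j_0(\tau))$ sends $W(L_0)$ to $gW(L_0)g^{-1}$, which is another subgroup of the stabilizer isomorphic to $\G$; but you have not shown that it equals $W(L_0)$. Equivalently, you would need that every norm-$2$ vector of $\La$ orthogonal to $j_0(\tau)$ already lies in $L_0$. For $\tau\notin\Q(\omega)$ this is true (any $v=v_L+v_M$ with $v_M\in M_0^\vee\setminus\{0\}$ orthogonal to $j_0(\tau)$ forces $\tau\in K$), but $\Hp'$ contains plenty of points of $\Q(\omega)$ (e.g.\ $\tau=2\omega$), and there your argument gives no control. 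Without this, you cannot conclude $\mathrm{Stab}_{\PG}(j_0(\tau))\subset N$, and hence cannot conclude the stabilizer is exactly $\G$; note that (iii) also rests on this conclusion.

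The paper closes this gap not by a lattice argument but by invoking the classification of automorphism groups of cubic surfaces containing $\G$ (Hosoh): such a group has order $54$, $108$, or $648$. Proposition~\ref{prop:half_plane} then pins the $648$ case to the $PSL_2(\Z)$-orbit of $\omega$ (via vectors of norm $-3$ in $M_0$) and the $108$ case to the orbit of $i$ (using that $\G$ is normal in the order-$108$ group, so the full stabilizer lands in $N$), leaving order $54$ for $\tau\in\Hp'$. You should either import this classification input or supply a direct lattice-theoretic argument that the norm-$2$ root lattice orthogonal to $j_0(\tau)$ is exactly $L_0$ for all $\tau\in\Hp'$.
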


Now we will apply this result to obtain the explicit formula for periods of some explicit family of cubic surfaces.

For $\tau\in \Hp$, denote by $E_\tau$ the elliptic curve given by the equation $P_\tau(x,y,z)=0$ in $\mathbb P^2$, where
$$P_\tau(x,y,z)=y^2z-4x^3+g_2(\tau)xz^2+g_3(\tau)z^3,$$
and $g_2(\tau)=60E_4(\tau), g_3(\tau)=140E_6(\tau)$ are multiples of the Eisenstein series. Denote by $X_\tau$ the cubic surface given by the equation $s^3+P_\tau(x,y,z)=0$ in $\mathbb P^3$. Since the space $\Hp$ is simply connected, a framing $q\colon H^3(T(X_i),\Z)\to \La$ of the cubic surface $X_i$ gives a canonical framing $r_q(\tau)$ of the cubic surface $X_{\tau}$ for any $\tau$. Define the map $\Pe_q'\colon \Hp\to \D'$ by the formula $\Pe_q'(\tau)=\mc P'((X_\tau, r_q(\tau)))$.

The following theorem gives the explicit formula for the periods of the cubic surface $X_\tau$. It provides a solution to the third problem posed in \cite[Section 6]{carlson2013cubic}. We recall that the group $\PG$ acts on $\D'$

\begin{theorem}
\label{th:explicit_formula}
For any framing $q$ of the cubic surface $X_i$, there is an element $g\in \PG$ such that for any $\tau\in \Hp$ we have $\mc P_q'(\tau)=g(j_0(\tau))$.  In particular $\mc P'(X_\tau)=j_0(\tau)\mod \PG$.
\end{theorem}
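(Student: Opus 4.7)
The plan is to reduce Theorem~\ref{th:explicit_formula} to Theorem~\ref{th:about_bijection} and Corollary~\ref{cor:characterisation} by a rigidity argument: I will show that $\mc P'_q(\tau)$ is orthogonal to a single primitive $D_3$-sublattice $L\subset\La$ for every $\tau$, deduce that $\mc P'_q$ factors through $j_0$ up to a translation by some $g_0\in\PG$, and then use the modularity of $g_2,g_3$ to rigidify the resulting self-map of $\Hp$. First, I would check that $X_\tau$ carries a $\G$-action for every $\tau$: the deck transformation $s\mapsto\w s$ of the triple cover, together with the $\G$-symmetries of the branch curve $E_\tau$ (realised after bringing $E_\tau$ into Hesse form, where translations by $3$-torsion and the elliptic involution act linearly on $\mb P^2$), generate a subgroup of order $54$ in $\Aut(X_\tau)$. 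By Corollary~\ref{cor:characterisation}, $\mc P'_q(\tau)$ lies in the $\D$-orthogonal complement of $f_\tau(D_3)$ for some primitive embedding $f_\tau\colon D_3\hookrightarrow\La$.

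Next I would argue that $L_\tau:=f_\tau(D_3)$ is independent of $\tau$: it is the image under the framing $r_q(\tau)$ of the ``root sublattice'' of the $\G$-action on $H^3(T(X_\tau),\Z)$, and since the family $r_q(\tau)$ is obtained by parallel transport over the simply-connected base $\Hp$, $L_\tau$ varies continuously in the discrete set of sublattices of $\La$ and is therefore constant. Using the uniqueness of primitive embeddings $D_3\to\La$ modulo $\PG$ (stated above and proved in the next section), choose $g_0\in\PG$ with $g_0(L_0)=L$. Then $\mc P'_q(\Hp)\subseteq g_0(j_0(\Hp))$, so there is a holomorphic $\varphi\colon\Hp\to\Hp$ with $\mc P'_q(\tau)=g_0(j_0(\varphi(\tau)))$. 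Modularity of $g_2,g_3$ produces explicit isomorphisms $X_\tau\cong X_{\gamma\tau}$ for each $\gamma\in SL_2(\Z)$, so the composite $\tau\mapsto\mc P'(X_\tau)\in\PG\lf\D'$ is $\PSL$-invariant; combined with the bijection $\bar j_0$ of Theorem~\ref{th:about_bijection}, this forces $\varphi(\tau)\equiv\tau\pmod\PSL$ for every $\tau\in\Hp'$, and hence by continuity for every $\tau\in\Hp$. Since $\PSL$ is discrete and $\Hp$ is connected, $\varphi$ must coincide with a single element $\gamma_0\in\PSL$. Lifting $\gamma_0$ to $\tilde\gamma_0\in\PG$ through the stabiliser of $L_0$ (the lift underlying Theorem~\ref{th:about_bijection}) and setting $g:=g_0\tilde\gamma_0$ yields $\mc P'_q(\tau)=g(j_0(\tau))$ for all $\tau$.

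The hardest part is the first step: verifying that the full group $\G$ of order $54$ acts on $X_\tau$ when presented in Weierstrass form, where only diagonal scalings are manifestly visible. One route is to apply an explicit coordinate change bringing $E_\tau$ into Hesse form $x^3+y^3+z^3-3\lambda(\tau)xyz=0$ (depending on a choice of $3$-torsion on $E_\tau$) and transport the $\G$-symmetry of the corresponding triple cover back to $X_\tau$; alternatively one may exhibit a primitive $D_3$-sublattice in $H^3(T(X_\tau),\Z)$ at a single base point (say $\tau=i$, where the automorphism group is known explicitly) and then propagate along $\Hp$ by the rigidity argument above.
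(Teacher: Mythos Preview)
Your overall architecture is the same as the paper's: land $\mc P'_q(\Hp)$ inside a single copy $g_0\cdot j_0(\Hp)$, write $\mc P'_q=g_0\circ j_0\circ\varphi$ for a holomorphic $\varphi\colon\Hp\to\Hp$, show $\varphi\in\PSL$, and absorb it into $g$. Your reduction to a single $D_3^\perp$ via parallel transport of the ``root sublattice'' is a valid repackaging of what the paper does with Lemma~\ref{lemma:connectedness} and the path-connectedness of $\Hp'$.

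The gap is the sentence ``combined with the bijection $\bar j_0$ of Theorem~\ref{th:about_bijection}, this forces $\varphi(\tau)\equiv\tau\pmod{\PSL}$.'' Modularity of $g_2,g_3$ only tells you that $\tau\mapsto\mc P'(X_\tau)=\bar j_0([\varphi(\tau)])$ is $\PSL$-invariant, i.e.\ that $\varphi$ \emph{descends} to a holomorphic self-map $\bar\varphi$ of $\PSL\backslash\Hp'\cong\mb P^1\setminus\{0,1,\infty\}$. The bijectivity of $\bar j_0$ does not force $\bar\varphi=\mathrm{id}$: that would require knowing $\mc P'(X_\tau)=\bar j_0([\tau])$, which is precisely the statement being proved. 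As written, this step is circular, and your subsequent continuity/discreteness argument in Step~8 takes as input exactly the conclusion $\varphi(\tau)\equiv\tau$ that is missing.

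The paper closes this gap with three further ingredients you have not supplied. First, \emph{injectivity} of $\bar\varphi$: if $f(\tau_1)\equiv f(\tau_2)\pmod{\PSL}$ then $\mc P'_q(\tau_1)\equiv\mc P'_q(\tau_2)\pmod{\PG}$, and global Torelli for cubic surfaces forces $X_{\tau_1}\cong X_{\tau_2}$, hence $\tau_1\equiv\tau_2$. Second, the complex-analytic Lemma~\ref{lemma:extension}: an injective holomorphic self-map of $\mb P^1\setminus\{0,1,\infty\}$ extends to a M\"obius transformation of $\mb P^1$. Third, control at the punctures: Proposition~\ref{prop:half_plane} and Lemma~\ref{lemma:Hasse} together say that $\varphi$ sends the $\PSL$-orbit of $\w$ to itself and the orbit of $i$ to itself, so the extended M\"obius map fixes $0$ and $1$ and hence is the identity. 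Only then does one lift through the covering $\Hp'\to\mb P^1\setminus\{0,1,\infty\}$ to conclude $\varphi\in\PSL$. Your outline needs exactly this analytic block inserted between ``$\varphi$ descends'' and ``$\varphi\in\PSL$.''
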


We remark that $X_i$ and $W_\w$ are the only cubic surfaces with the automorphism group of orders $108$ and $648$. The value of the period mapping $\mc P'$ for the cubic surface $X_\w$ was calculated in \cite[Section 11]{allcock2002complex}. So we can consider our result as a generalization of that calculation.

\section{Hermitian lattices} For background on hermitian lattices we refer reader to \cite{allcock1999new}. Denote  by $\overline{\cdot}$ the canonical involution on $\E$. Let $R$ be an arbitrary $\E$-module. \emph{An $\E$-valued hermitian form} (or briefly a hermitian form) on $R$ is a $\mathbb Z$-bilinear map $(\cdot,\cdot)\colon R\times R\to \E$ satisfying $(\alpha x,y)=\alpha (x,y)$ and $(x,y)=\overline{(y,x)}$ for any $\alpha\in \E, x,y\in R$. Denote by $K$ the field of fractions of $\E$. Similarly, one can define the notion of a $K$-valued hermitian form and a $K/\E$-valued hermitian form. A hermitian form is called \emph{nondegenerate} if for any non-zero $x\in R$ there is $y\in R$ such that $(x,y)\ne 0$.

\emph{A hermitian lattice} (or briefly \emph{a lattice}) is a pair consisting of a finitely generated free $\E$-module $R$ and a $\E$-valued hermitian form on $R$. Often we will denote this hermitian form by $(,)_R$ or just $(,)$. A hermitian lattice is nondegenerate if the corresponding hermitian form is nondegenerate. We will consider only nondegenerate lattices. \emph{A rank} of some lattice $R$ is by definition dimension of the vector space $R\otimes_\E K$ over $K$.

\emph{An extension of lattices $R_1\subset R_2$} is an hermitian lattice $R_2$ together with a sublattice $R_1$ such that $|R_2/R_1|<\infty$. The lattice $R_2$ is called \emph{overlattice} of the lattice $R_1$.

For a nondegenerate lattice $R$ {\it the dual lattice} $R^\vee$ is defined as the set of vectors $v\in R\otimes_\E K$ such that $(v,w)\in \E$ for any $w\in R$. The hermitian form on $R$ induces the canonical $K$-valued hermitian form on $R^\vee$.

\emph{A finite hermitian space} is a finitely generated torsion $\E$-module together with a $K/\E$-valued hermitian form. Let $W$ be some finite hermitian space. A subspace $W'\subset W$ is called isotropic if $(w,w')=0$ for any $w,w'\in W'$. A homomorphism of two finite hermitian spaces is a homomorphism of modules over $\E$ preserving the hermitian form. 

For an arbitrary hermitian lattice $R$ there is a canonical structure of a finite hermitian space on the $\E$-module $R^\vee/R$. This finite hermitian space is called \emph{the discriminant group of $R$}. We will denote it by $D(R)$. As in the case of $\Z$-valued lattices, there is a correspondence between the set of all overlattices $R'\supset R$ up-to isomorphism and the set of all isotropic subspaces $W\subset D(R)$. Moreover $|W|$ is equal to $|R'/R|$.

Choose some basis $e_i$ of $R$. The determinant of the matrix $ ((e_i,e_j)_R)_{i,j}$ does not depend on the choice of $e_i$. It is called \emph{the discriminant} of $R$. Denote it by $\disc R$. If we have an extension of lattices $R_1\subset R_2$ then $\disc R_2=|R_2/R_1|\disc R_1$. As in the case of $\mathbb Z$-valued lattices one can prove that $|D(R)|=(\disc R)^2$.

Denote by $\Aut(R)$ the set of all automorphisms of $R$ over $\E$ preserving the hermitian form. Let $\PAut(R)=\Aut(R)/\mu_6$ where $\mu_6$ is the group of the sixth roots of unity. In the same way, we can define the group $\Aut(W)$ for a finite hermitian space $W$. A lattice $R$ is called {\it unimodular} if $D(R)=0$. For a finite hermitian space $W$, denote by $W(-1)$ the same $\E$-module with the following hermitian form: $(v,w)_{W(-1)}=-(v,w)_W$. 
For a vector $v\in R$ of norm $2$, define the automorphism $\sigma_v$ by the formula:
$$\sigma_v(w)=w-(w,v)v.$$

Denote by $W(R)$ the group generated by $\sigma_v$ where $v$ ranges over all vectors of norm $2$. This group is called {\it the Weyl group} of $R$. It is easy to see that this group lies in the kernel of the natural map $\Aut(R)\to \Aut(D(R))$. In particular, for any lattice embedding $R'\to R$ there is a natural map $W(R')\to \Aut(R)$.

Denote by $H$ a $2$-dimensional hermitian lattice with the following hermitian form:
\begin{equation*}
  \left(\begin{matrix}
0 & \theta \\
-\theta & 0
\end{matrix}  
\right).
\end{equation*}
We set $V=D(H)$. It is easy to see that $V\simeq (\E/(\theta))^2$ with the following hermitian form
\begin{equation*}
  \left(\begin{matrix}
0 & \theta/3 \\
-\theta/3 & 0
\end{matrix}  
\right).
\end{equation*}

The following lemma summarizes the main properties of the lattice $H$:

\begin{lemma}
\label{lemma:about_H}
The following statements are true:
\begin{enumerate}
    \item We have $\Aut(V)\simeq SL_2(\F_3)$.
    \item We have $\Aut(H)\simeq SL_2(\Z)\times \mu_3$. The natural map $\Aut(H)\to \Aut(V)$ coincides with the natural projection $SL_2(\Z)\times \mu_3\to SL_2(\F_3)$.
    \item Let $H'$ be a hermitian lattice of signature $(1,1)$ such that $D(H')\simeq V$. Then $H'\simeq H$.
    \item The group $\Aut(H)$ acts transitively on the vectors of norm $-3$.
\end{enumerate}
\end{lemma}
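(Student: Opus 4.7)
The plan is to handle the four parts in order; part~(2) does most of the work, and part~(4) falls out of it.

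For part~(1), the key observation is that $\w\equiv\bar\w\equiv 1\bmod\tth$, so both Eisenstein scalars and the canonical involution act trivially on $\E/(\tth)\simeq\F_3$. Hence the $K/\E$-valued hermitian form on $V\simeq\F_3^2$ reduces to the $\F_3$-valued nondegenerate alternating form $(\tth/3)(x_1y_2-x_2y_1)$, and its automorphism group is $Sp_2(\F_3)=SL_2(\F_3)$.

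For part~(2) I plan to exhibit the obvious subgroups of $\Aut(H)$ --- $SL_2(\Z)$ acting by integer matrices in the basis $e_1,e_2$ (integer entries automatically respect $\tth(x_1\bar y_2-x_2\bar y_1)$) and $\mu_6$ acting by scalars --- and then prove $\Aut(H)=\mu_6\cdot SL_2(\Z)$ by transitivity on primitive isotropic vectors. The key facts are that any primitive $v=(x_1,x_2)\in H$ with $(v,v)=0$ satisfies $x_1\bar x_2\in\Z$, which forces $v=u(m,n)$ for some $u\in\mu_6$ and coprime $m,n\in\Z$, so $\mu_6\cdot SL_2(\Z)$ acts transitively on such vectors; and that the stabilizer of $e_1$ in $\Aut(H)$ is the integer unipotent subgroup inside $SL_2(\Z)$ (a direct check of the three defining relations $\alpha\bar\beta,\gamma\bar\delta\in\Z$ and $\alpha\bar\delta-\beta\bar\gamma=1$). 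Since $\phi(e_1)$ is always primitive isotropic for $\phi\in\Aut(H)$, these two facts give $\Aut(H)=\mu_6\cdot SL_2(\Z)$, and the multiplication map $\mu_6\times SL_2(\Z)\to\Aut(H)$ has kernel $\{(1,I),(-1,-I)\}$; using $\mu_6\simeq\mu_2\times\mu_3$ the quotient is $SL_2(\Z)\times\mu_3$. The compatibility with part~(1) is then immediate: $\mu_3$ is killed by reduction modulo $\tth$ (since $\w\equiv 1\bmod\tth$), and the induced map $SL_2(\Z)\to SL_2(\F_3)$ is the classical reduction modulo~$3$ (using $3=-\tth^2$).

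For part~(3), one first extracts $\disc(L)=-3$ from $|D(L)|=9$ and the signature $(1,1)$, and then appeals to uniqueness of indefinite hermitian lattices over $\E$ in a fixed genus: $L$ and $H$ have the same rank, signature, and discriminant form, so they are isometric. A more self-contained alternative is to locate an isotropic vector $v\in L$ --- it exists in $L\otimes K$ by signature and descends to a primitive vector in $L$ after a content argument --- and extend it to a hyperbolic basis identifying $L$ with $H$. Part~(4) is then a corollary of~(2): a short computation with $\w$ and $\tth=i\sqrt3$ shows that $(v,v)$ is $3$ times an integer determinant, so norm-$(-3)$ vectors are in bijection with integer $2\times 2$ matrices of determinant $-1$; the $SL_2(\Z)$-action becomes left multiplication on these, and such matrices form a single $SL_2(\Z)$-coset in $GL_2(\Z)$.

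The main obstacle will be the surjectivity half of part~(2), specifically ensuring that the transitivity on primitive isotropic vectors genuinely saturates $\Aut(H)$ and that no exotic automorphism escapes; the genus-uniqueness input in part~(3) is a secondary concern, where it may be cleaner to produce an explicit isotropic vector than to cite a general classification theorem over $\E$.
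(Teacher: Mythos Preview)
Your arguments for parts (1) and (4) are essentially the paper's: identify the form on $V$ with the alternating $\F_3$-form (so $\Aut(V)=SL_2(\F_3)$), and compute that $(v,v)=3(ad-bc)$ for $v=(a+b\w)e_1+(c+d\w)e_2$, reducing transitivity on norm $-3$ vectors to the single $SL_2(\Z)$-coset of determinant $-1$ matrices in $GL_2(\Z)$.

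For part~(2) you take a genuinely different route. The paper writes down the three conditions $a\bar b=\bar a b$, $c\bar d=\bar c d$, $a\bar d-b\bar c=1$ and argues directly that there is a unit $u\in\{1,\w,\w^2\}$ with $a=u\bar a,\dots,d=u\bar d$, so every automorphism lies in $\mu_3\cdot SL_2(\Z)$. Your orbit--stabilizer argument via primitive isotropic vectors is equally valid: the key step, that $x_1\bar x_2\in\Z$ forces $(x_1,x_2)=u(m,n)$ with $u\in\mu_6$ and $m,n\in\Z$ coprime, is correct (write $x_1/x_2=x_1\bar x_2/|x_2|^2\in\Q$ and clear denominators), and the stabilizer of $e_1$ is indeed the integer unipotent subgroup. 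Your approach is more conceptual; the paper's is more elementary and avoids the group-quotient bookkeeping.

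Part~(3), however, has a real gap. The assertion that an isotropic vector ``exists in $L\otimes K$ by signature'' is false in general: the binary hermitian form $x_1\bar x_1-2x_2\bar x_2$ over $K=\Q(\w)$ has signature $(1,1)$ but is anisotropic, since $2$ is not a norm from $\E$. Signature over $\C$ does not by itself produce isotropic vectors over $K$. The paper handles this carefully: it first passes to an overlattice $\wt{H'}\supset H'$ corresponding to an isotropic line in $V\simeq D(H')$, observes that $\wt{H'}$ is unimodular, and then cites a specific result (indefinite unimodular $\E$-lattices of signature $(1,n)$ contain nonzero isotropic vectors) to obtain a primitive isotropic $v_1\in H'$. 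From there it completes $v_1$ to a basis $v_1,v_2$ with $(v_1,v_2)=\theta$, uses a discriminant count to see this basis spans $H'$, and finally adjusts $v_2$ and checks that $(v_2,v_2)\in\{-1,1\}$ would give the wrong discriminant group. Your ``content argument'' is fine once the isotropic vector exists, but you need either the unimodular-overlattice trick or a precise genus-uniqueness theorem for rank-$2$ indefinite $\E$-lattices; the vague citation you offer is not a substitute.
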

\begin{proof}\begin{enumerate}
    \item We have $(av_1+bv_2,cv_1+dv_2)=(ad-bc)\tth/3$. So the matrix $\left(\begin{matrix}
a&b\\
c&d
\end{matrix}  
\right)$
lies in $\Aut(V)$ if and only if $ad-bc=1$.
\item The matrix $\left(\begin{matrix}
a&b\\
c&d
\end{matrix}  
\right)$ lies in $\Aut(H)$ if and only if $a\bar b=\bar ab,c\bar d=\bar c d,a\bar d-b\bar c=1$. The third equality implies that the numbers $a,b$ are coprime. So, from the first equality it follows that there is a unit $u_1$ such that $a=u_1\bar a, b=u_1\bar b$. In the same way, there is a unit $u_2$ such that $c=u_2\bar c, d=u_2\bar d$. Substituting these relations into the third equality, we get $u_1=u_2$. If the number $u_1$ belonged to the set $\{-1,-\w,-\w^2\}$ the numbers $a,b,c,d$ would be divisible by $\tth$ and so the third equality would not be satisfied. So $u_1\in \{1,\w,\w^2\}$. Since $\E\cap\mb R=\Z$ it follows that the matrix  $\left(\begin{matrix}
a&b\\
c&d
\end{matrix}  
\right)$ lies in $\mu_3\times SL_2(\Z)$. On the other hand, any such matrix lies in the group $\Aut(H)$.
\item Let $\wt H'\supset H'$ be the lattice corresponding to some isotropic vector of $V$. It is easy to see that $\wt H'$ is unimodular. According to \cite[the proof of Proposition 2.6.]{beauville2009moduli} any unimodular lattice of signature $(1,n), n\ge 1$ contains a non-zero isotropic vector. Applying this statement to the lattice $H'$, we conclude that $H'$ contains some non-zero isotropic vector. Call this vector $v_1$. We can assume that $v_1$ is primitive. There is a vector $v_2'\in H'^\vee$ such that $(v_1,v_2')=1$. Since $D(H')\simeq (\E/(\theta))^2$ the vector $v_2:=\theta v_2'$ lies in $H'$. Let us denote by $Q$ the lattice generated by $v_1$ and $v_2$. We have $\disc H'=|H'/Q|\disc Q$. Since $\disc Q=3$ and $(\disc H')^2=|D(H')|=|V|=9$ we conclude that $H'=Q$. Substituting $v_2\mapsto v_2+cv_1, c\in \E$, we can assume that $(v_2,v_2)\in \{-1,0,1\}$. It is easy to see that if the norm of the vector $v_2$ were non-zero, the finite hermitian space $D(H')$ wouldn't be  isomorphic to $V$. So $(v_2,v_2)=0$ and $H'\simeq H$.
\item Denote by $q_1,q_2$ the canonical basis of $H$. The norm of the vector $w=(a+b\omega)q_1+(c+d\omega)q_2$ is equal to $((a+b\omega)q_1+(c+d\omega)q_2, (a+b\omega)q_1+(c+d\omega)q_2)=(a+b\omega)\overline{(c+d\omega)}\theta-(c+d\omega)\overline{(a+b\omega)}\theta=((a+b\omega)(c+d\omega^2)-(c+d\omega)(a+b\omega^2))\theta=3(ad-bc)$. So this vector has norm  $-3$ if and only if $ad-bc=-1$. Now the statement follows from the following two facts:
\begin{enumerate}
    \item $SL_2(\mathbb Z)\subset \Aut(H)$.
    \item The group $SL_2(\mathbb Z)$ acts transitively on the pairs of vectors $((a,b),(c,d))\in \mathbb Z^2\times \mathbb Z^2$ satisfying $ad-bc=-1$.
\end{enumerate}
\end{enumerate}
\end{proof}

Let $a\in \E$. Denote by $D_3(a)$ the following hermitian lattice
$$\{(v_1,v_2,v_3)\in \E^{\oplus 3}|v_1+v_2+v_3=0\mod a\E\},$$
and put $D_3=D_3(\tth)$.

The following lemma summarizes the properties of the lattice $D_3$:
\begin{lemma}
\label{lemma:about_D3}
The following statements are true:
\begin{enumerate}
\item The finite hermitian space $D(D_3)$ is isomorphic to $V$.
    \item The Weyl group of the lattice $D_3$ is isomorphic to $\G$.
    \item The following sequence is exact:
    $$1\to W(D_3)\to \Aut(D_3)\to \Aut(D(D_3))\to 1.$$
\end{enumerate}
\end{lemma}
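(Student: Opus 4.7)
The plan is to verify the three statements in order, with statement $(3)$ being the most substantial. For $(1)$, I observe that $D_3$ is the kernel of the sum map $\E^3\to\E/(\tth)$, hence of index $3$ in $\E^3$. Since $\E^3$ is self-dual and the vector $(1/\tth,1/\tth,1/\tth)$ pairs integrally with every element of $D_3$, we obtain a chain $D_3\subset\E^3\subset D_3^\vee$ of two index-$3$ extensions, giving $|D(D_3)|=9$. With generators $x=(1,0,0)+D_3$ and $y=(1/\tth,1/\tth,1/\tth)+D_3$, a direct computation of the pairings modulo $\E$ yields the same form as on $V$. For $(2)$, I will enumerate the norm-$2$ vectors of $D_3$: they are the $54$ triples with exactly one zero entry and two $\mu_6$-unit entries summing to $0$ modulo $\tth$, forming $9$ orbits under $\mu_6$-scaling. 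A direct check identifies the corresponding $9$ reflections with the $9$ order-$2$ reflections of $\G=G(3,3,3)\subset GL_3(\E)$, and since $\G$ is a complex reflection group generated by these, $W(D_3)=\G$.

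For $(3)$, the inclusion $W(D_3)\subseteq\ker(\Aut(D_3)\to\Aut(D(D_3)))$ is a general fact recalled earlier in the section. For surjectivity onto $\Aut(V)\simeq SL_2(\F_3)$ (by the first part of the preceding lemma), I will exhibit two elements of $\Aut(D_3)$ whose images generate $SL_2(\F_3)$. The diagonal $\mathrm{diag}(\w,1,1)$ preserves $D_3$ (since $\w\equiv 1\pmod{\tth}$) but lies outside $\G$ (its exponent sum is $1\not\equiv 0\pmod 3$); a direct check shows that it fixes $x$ and sends $y\mapsto y-x$, inducing $T=\bigl(\begin{smallmatrix}1&-1\\0&1\end{smallmatrix}\bigr)$. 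The Fourier-type map $\phi(e_i)=(1/\tth)(1,\w^{i-1},\w^{2(i-1)})$ is unitary (orthonormality follows from $\sum_k\w^{k(i-j)}=3\delta_{ij}$ together with $\tth\bar\tth=3$); using $1-\w=\w^2\tth$ and $1-\w^2=-\w\tth$, one computes $\phi(\tth e_1)=(1,1,1)$, $\phi(e_1-e_2)=(0,\w^2,-\w)$, $\phi(e_2-e_3)=(0,1,-1)$, and the resulting change-of-basis matrix on the basis $\tth e_1,e_1-e_2,e_2-e_3$ of $D_3$ has unit determinant, so $\phi\in\Aut(D_3)$. A further calculation yields $\phi(x)=y$ and $\phi(y)=-x$, so $\phi$ induces $S=\bigl(\begin{smallmatrix}0&-1\\1&0\end{smallmatrix}\bigr)$. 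Since $\langle S,T\rangle=SL_2(\F_3)$, surjectivity follows.

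For the reverse inclusion I will bound $|\Aut(D_3)|$ from above via its action on the $4$ index-$3$ overlattices $D_3\subset L_\ell\subset D_3^\vee$ (corresponding bijectively to the $4$ isotropic lines $\ell\subset V$). The stabilizer of $L_0=\E^3$ equals the subgroup $\Aut(\E^3)_{D_3}$ of $\Aut(\E^3)\simeq S_3\ltimes\mu_6^3$ preserving $D_3$; the condition that a preserving element have all three diagonal units in a common residue class modulo $\tth$ gives $|\Aut(\E^3)_{D_3}|=6\cdot 54=324$. Hence $|\Aut(D_3)|\leq 4\cdot 324=1296$, matching the lower bound $|W(D_3)|\cdot|\Aut(V)|=54\cdot 24=1296$ from surjectivity, and forcing $\ker=W(D_3)$. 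The main obstacle will be the hands-on verification of the Fourier map $\phi$: its unitarity, its preservation of $D_3$, and the computation of its action on $V$, on which the entire surjectivity argument rests.
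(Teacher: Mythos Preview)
Your proof is correct. Parts (1) and (2) are essentially the same as the paper's, with only cosmetic differences: your generators $x,y$ of $D(D_3)$ agree with the paper's $\beta,\alpha$ up to sign (since $1/\tth=-\tth/3$), and your direct enumeration of the nine reflection classes replaces the paper's transitivity argument but reaches the same conclusion.

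Part (3) is where your route genuinely diverges. For surjectivity, the paper produces generators of $SL_2(\F_3)$ via \emph{triflections} $\sigma_a^3$ in norm-$3$ vectors $a_1=(0,0,\tth)$ and $a_2=(1,1,\w)$, which is natural from the reflection-group viewpoint and foreshadows the later remark that $\Aut(D_3)$ is the complex reflection group number~26. Your diagonal matrix $\mathrm{diag}(\w,1,1)$ is essentially the paper's $A=\sigma_{a_1}^3$ moved to a different coordinate, but your Fourier map $\phi$ is a different (and elegant) choice; its unitarity and the computation $\phi(x)=y$, $\phi(y)=-x$ check out. For the kernel, the paper argues directly: any $g$ in the kernel extends to $\Aut(\E^{\oplus 3})$, hence has the form $g'C^kA^l$ with $g'\in W(D_3)$, and then verifies that $\langle C,A\rangle$ injects into $\Aut(D(D_3))$. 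Your orbit--stabilizer bound $|\Aut(D_3)|\le 4\cdot 324=1296$ via the action on the four index-$3$ overlattices, matched against the lower bound $54\cdot 24$ from surjectivity, is cleaner and avoids the case analysis; it also immediately yields $|\Aut(D_3)|=1296$, which the paper only obtains implicitly. Either approach works; yours is more counting-theoretic, the paper's more constructive.
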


\begin{proof}\begin{enumerate}
    \item The group $D(D_3)$ is generated by the following elements: $$\alpha=(\tth/3,\tth/3,\tth/3), \beta=(1,0,0).$$ We have $(\alpha,\alpha),(\beta,\beta)\in \E, (\alpha,\beta)=\tth/3$. The first statement is proved.
    
    \item The reflections in the vectors $(1,-1,0),(0,1,-1)$ generate a subgroup isomorphic to $S_3$ acting as permutations of the coordinates. The reflection in the vector $(1,-\omega,0)$ is equal to the composition of some permutation and the diagonal matrix $(\w^2,\w,1)$. Denote by $G_0$ the group generated by these three reflections. By definition of the group $G$, the group $G_0$ is isomorphic to $G$. Let us prove that $G_0$ coincides with $W(D_3)$. The group $G_0$ acts on the set of vectors $v\in D_3$ of norm $2$. It is easy to see that this action is transitive. It follows that for any vector $v\in D_3$ of norm $2$, the reflection in this vector belongs to $G_0$. We conclude that $W(D_3)=G_0$ and so the group $W(D_3)$ is isomorphic to $G$.

    \item 
    \begin{enumerate}
        \item \emph{Exactness in the second term}. For any $g\in \Aut(D_3)$ lying in the kernel of the map $\Aut(D_3)\to \Aut(D(D_3))$ we need to show that $g\in W(D_3)$. Let us assume that we have an embedding of hermitian lattices $R_1\subset R_2$ and an automorphism of the lattice $R_1$. This automorphism can be extended to an automorphism of the lattice $R_2$ if and only if it preserves an isotropic subspace $W\subset D(R_1)$ corresponding to the embedding $R_1\subset R_2$. Applying this statement here, we conclude that $g$ has an extension to an automorphism of $\E^{\oplus 3}.$ It follows that $g$ has the form $g'C^kA^l, k\in\{0,1\}, l\in \{0,1,2\}$ where $g'\in W(D_3)$ and $C,A$ are the scalar matrices $(-1,-1,-1),(1,1,\w)$. To prove that $g\in W(D_3)$ we need to show that $C^kA^l=0$. Denote by $G'$ the subgroup generated by $C$ and $A$. It is enough to show that the map $G'\to \Aut(D(D_3))$ is injective. Let us identify the group $\Aut(D(D_3))$ with $SL_2(\mathbb F_3)$. Explicit computation shows that the images of the automorphisms $C$ and $A$ under the natural homomorphism $\Aut(D_3)\to \Aut(D(D_3))$ are equal to

$$\left(\begin{matrix}
1 &0\\
0 &-1
\end{matrix}\right), \quad
\left(\begin{matrix}
1 &0\\
1 & 1
\end{matrix}\right).$$

These two matrices generate a subgroup isomorphic to $\Z/6\Z$. We conclude that the natural map $G'\to \Aut(D(D_3))$ is injective and so $g\in W(D_3)$.

        \item \emph{Exactness in the third term}. For a vector $a$ of the lattice $\D_3$ of norm $3$ consider the following automorphism of the vector space $D_3\otimes_\E K$:

$$\sigma_{a}^3(v)=v-\dfrac {1-\w}3(v,a)a.$$

If the vector $a$ satisfies $(a,w)\in \theta \E$ for any $w\in D_3$, then the automorphism $\sigma_{a}^3(v)$ lies in the group $\Aut(D_3)$. Let $a_1=(0,0,\tth), a_2=(1,1,\w)$. The automorphism $\sigma_{a_1}$ is equal to $A$ and the automorphism $\sigma_{a_2}^3$ has the following form: 

$$\sigma_{a_2}=\dfrac{-\tth}3\left(\begin{matrix}
  \w&\w^2&1\\
  \w^2&\w&1\\
  \w&\w&\w
\end{matrix}\right).
$$

%

 Explicit calculation shows that the automorphisms $\sigma_{a_1}^3, \left({\sigma_{a_1}^3}\right)^2\sigma_{a_2}^3$ have the following images in the group $\Aut(D(D_3))$:
 $$\left(\begin{matrix}
1 &0\\
1 &1
\end{matrix}\right), 
\left(\begin{matrix}
0 & -1\\
1 & 0
\end{matrix}\right).$$

It is well known that these matrices generate the whole group $SL_2(\F_3)$. So the map $\Aut(D_3)\to \Aut(D(D_3))$ is surjective.
    \end{enumerate}

\end{enumerate}
\end{proof}

\begin{remark}
It is worth mentioning that the automorphisms $\sigma_{a_1}^3, \sigma_{a_2}^3$ generate the stabilizer of the vector $(-1,1,0)$ and this stabilizer is isomorphic to the group $SL_2(\F_3)$. So the group $\Aut(D_3)$ is isomorphic to the semi-direct product $\G\rtimes SL_2(\F_3)$. It follows from the proof that the image of the group $\Aut(D_3)$ under the natural map $GL_3(\mb C)\to PGL_3(\mb C)$ is equal to the so-called Hessian group (see \cite{dolgachevHesse}). It follows from the proof that the group $\Aut(D_3)$ is a complex reflection group of order $1296$. It has the number $26$ in the Shephard-Todd's list \cite{ShephardTodd}.
\end{remark}

\begin{proposition}
\label{prop:primitive_embedding}
There is a primitive embedding $D_3\to\La$. The action of the group $\Aut(\La)$ on the set of all such embeddings is transitive.
\end{proposition}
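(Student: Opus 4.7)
The plan is to handle existence and transitivity separately, both via the overlattice description of $\Lambda$ as a gluing of $D_3$ and its orthogonal complement. For existence, I would verify directly that the three vectors $w_1 = (-1,\w,-1,0,-1)$, $w_2 = (0,0,1,-1,0)$, $w_3 = (0,1,-1,0,0)$ in $\Lambda$ span a sublattice $L_0$ isomorphic to $D_3$ and that $L_0$ is primitive. The first assertion reduces to computing the $3\times 3$ Gram matrix of $w_1, w_2, w_3$ against $(\cdot,\cdot)_\Lambda$ and matching it with the Gram matrix of the standard generators of $D_3$. Primitivity is checked by exhibiting a $3 \times 3$ minor of the coordinate matrix of $(w_1, w_2, w_3)$ that is a unit in $\E$; equivalently, the three vectors extend to an $\E$-basis of $\Lambda$.

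For transitivity, let $f_1, f_2 \colon D_3 \to \Lambda$ be two primitive embeddings and set $M_i = f_i(D_3)^\perp$. Since $\Lambda$ is unimodular and $D_3$ is positive definite of rank $3$, each $M_i$ has signature $(1,1)$, and the standard discriminant-form calculation for a unimodular overlattice of $f_i(D_3) \oplus M_i$ gives $D(M_i) \simeq D(D_3)(-1) \simeq V$; here $V \simeq V(-1)$ is seen by swapping the two basis vectors in the explicit matrix for the form on $V$. By Lemma \ref{lemma:about_H}(3) each $M_i$ is therefore isomorphic to $H$. The embedding $f_i$ then presents $\Lambda$ as the overlattice of $f_i(D_3) \oplus M_i$ associated to an anti-isometry $\alpha_i \colon D(f_i(D_3)) \to D(M_i)$, in the sense of the correspondence between overlattices and maximal isotropic subgroups of the discriminant group recalled in the previous section.

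Now pick arbitrary lattice isomorphisms $\phi_0 \colon f_1(D_3) \to f_2(D_3)$ and $\psi_0 \colon M_1 \to M_2$. The pair $(\phi_0, \psi_0)$ extends to an automorphism of $\Lambda$ if and only if the induced maps on discriminant groups intertwine $\alpha_1$ and $\alpha_2$. By Lemma \ref{lemma:about_D3}(3) the natural map $\Aut(D_3) \to \Aut(V)$ is surjective, and by Lemma \ref{lemma:about_H}(2) so is $\Aut(H) \to \Aut(V)$. Precomposing $\phi_0$ with a suitable element of $\Aut(D_3)$ one enforces the required intertwining relation, and gluing yields the desired $g \in \Aut(\Lambda)$ with $g \circ f_1 = f_2$.

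The step I expect to require the most care is the bookkeeping for the gluing correspondence over $\E$: one has to keep straight the sign convention on $V(-1)$, the identification $V \simeq V(-1)$ arising from swapping basis vectors, and the exact sense in which isotropic subgroups of $D(L_1) \oplus D(L_2)$ classify overlattices of $L_1 \oplus L_2$ when the overlattice is required to be unimodular. Once this Nikulin-style dictionary is set up, the transitivity half becomes essentially formal, and the two surjectivity statements in Lemmas \ref{lemma:about_H}(2) and \ref{lemma:about_D3}(3) are precisely the inputs that make the uniqueness argument work.
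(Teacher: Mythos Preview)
Your proposal is correct and, for the transitivity half, is essentially the same Nikulin-style gluing argument the paper gives: both identify the orthogonal complement as $H$ via Lemma~\ref{lemma:about_H}(3), encode the overlattice $\Lambda \supset D_3 \oplus H$ by an isotropic subgroup of $D(D_3)\oplus D(H)\simeq V\oplus V(-1)$, and then invoke the surjectivity of $\Aut(D_3)\to\Aut(V)$ and $\Aut(H)\to\Aut(V)$ to move any two such glue data into each other. The paper carries this out by an explicit coordinate computation in $\mathbb{F}_3^{\oplus 4}$, whereas you phrase it via anti-isometries; these are equivalent packagings of the same argument.

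The genuine difference is in the existence half. The paper does \emph{not} use the explicit vectors $w_1,w_2,w_3$ here; instead it builds $\Lambda$ abstractly as the overlattice of $D_3\oplus H$ attached to the diagonal isotropic subspace $V'=\{(v,v)\}\subset V\oplus V(-1)$, checks this overlattice is unimodular of signature $(4,1)$, and then invokes the classification of such lattices \cite[Theorem~7.1]{allcock1999new} to identify it with $\Lambda$. Your approach is more elementary---a direct Gram-matrix and minor computation---and avoids the appeal to the classification theorem; the paper's approach has the advantage that it simultaneously sets up the glue data in a form immediately reusable for the transitivity argument and for Proposition~\ref{prop:normalizer}.

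One small point of care: as written, your adjustment precomposes $\phi_0$ by an element of $\Aut(D_3)$, which yields $g$ carrying $f_1(D_3)$ onto $f_2(D_3)$ but not necessarily $g\circ f_1=f_2$. To get transitivity on embeddings (not just on images) you should instead fix $\phi_0=f_2\circ f_1^{-1}$ and adjust $\psi_0$ via the surjectivity of $\Aut(H)\to\Aut(V)$. The paper's proof, read literally, also only establishes transitivity on images, with the stronger statement following from the same surjectivity input; so this is a cosmetic rather than a substantive gap.
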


\begin{proof}
Let us denote by $\La'$ the direct sum $D_3\oplus H$. We have $D(\La')\simeq V^{\oplus 2}$. Since $V\simeq V(-1)$, $D(\La')\simeq V\oplus V(-1)$. Let us denote by $\La''$ the extension of the lattice $\La'$ corresponding to the isotropic subspace $V'=\{(v,v)|v\in V\}\subset V\oplus V(-1)$. (Although V is isomorphic to $V'$ as $\E$-module, the restriction of the hermitian form on $V'$ is zero). We have $|\La''/\La'|=|V'|=9$. So $\disc \La=(\disc\La')/|\La''/\La'|=1$ and we conclude that the lattice $\La''$ is unimodular. According to \cite[Theorem 7.1.]{allcock1999new}, any two unimodular lattices of signature $(1,4)$ are isomorphic. In particular $\La''\simeq \La$ and we have constructed an embedding $D_3\subset D_3\oplus H\to\La$. It is easy to see that this embedding is primitive.

Let $L$ be the image of an arbitrary primitive embedding $D_3\to\La$. According to Lemma \ref{lemma:about_H} and Lemma \ref{lemma:about_D3} the orthogonal complement of $L$ inside $\La$ is isomorphic to $H$. Since there is a bijective correspondence between overlattices of $D_3\oplus H$ and isotropic subspaces $W\subset D(D_3\oplus H)$, it is enough to show that the group $\Aut(D_3\oplus H)$ acts transitively on the set of the subspaces $W\subset V\oplus V(-1)$ arising from some primitive embedding $D_3\oplus H\to \Lambda$. By Lemma \ref{lemma:about_H} and Lemma \ref{lemma:about_D3} the map $\Aut(D_3\oplus M)\to \Aut(D(D_3\oplus M))$ is surjective. So it is enough to show that the group $\Aut(V\oplus V(-1))$ acts transitively on the set of all isotropic subspaces arising from some primitive embedding $D_3\oplus H\to \Lambda$.

Since the embedding is primitive, the isotropic subspace $W$ does not contain any vectors of the form $(v,0), (0,v), v\ne 0$. We have $$|W|=|\Lambda/(L\oplus L_\La^\perp)|=\disc(L\oplus L_\La^\perp)/\disc(\La)=\disc{D_3}\disc(H)=9.$$
So $W$ is a $2$-dimensional vector space over $\mathbb F_3$.
Let us identify $V\oplus V(-1)$ with $\mathbb F_3^{\oplus 4}$. The hermitian form has the form $((a_1,a_2,a_3,a_4),(b_1,b_2,b_3,b_4))=((a_1b_2-a_2b_1)-(a_3b_4-a_4b_3))\theta/3$. Since $W$ does not contain any vectors of the form $(v,0), (0,v), v\ne 0$, it has a basis having the following form: $e_1=(1,0,a,b), e_2=(0,1,c,d)$. Since this space is isotropic, we have $ad-bc=1$. So there is an element $g$ of the group $SL_2(\mathbb F_3)$ such that $g((1,0))=(a,b), g((0,1))=(c,d)$. We conclude that $W$ is equal to $(1\times g)W_0$, where $W_0=\{(a,b,a,b)|a,b\in \mathbb F_3\}$ and $1\times g\in SL_2(\mathbb F_3)\times SL_2(\mathbb F_3)$.

Since $\Aut(V\oplus V(-1))\cong SL_2(\mathbb F_3)\times SL_2(\mathbb F_3)$, we conclude that the group $\Aut(V\oplus V(-1))$ acts transitively on the set of all isotropic subspaces arising from some primitive embedding $D_3\oplus H\to \Lambda$.  The proposition is proved.
\end{proof}

\begin{proof}[The proof of Proposition \ref{prop:characterisation}]
Let us prove the following implications
\begin{itemize}
    \item[$1\Rightarrow 2$.] As it was stated above, the stabilizer of the point $z$ in the group $\PG$ is isomorphic to the automorphism group of the cubic surface $S_z$. We can consider the automorphism group of a cubic surface $S_z$ as a subgroup of the Weyl group of the root system $E_6$ called $W(E_6)$(see for example \cite[Section 9.5]{dolgachev2012classical}). A conjugacy class $C$ of $W(E_6)$ is called  {\it effective} if there is a smooth cubic surface $S$ and an automorphism $g$ acting on it, such that $g$ belongs to $C$.
    
    According to \cite[Proposition 5.2]{hosoh1997automorphism} there are $16$ effective conjugacy classes in total. An automorphism of $S\subset \mb P^3$ is called {\it a harmonic homology} if it can be extended to an automorphism of $\mb P^3$ of order $2$ which fixes some hyperplane. According to \cite[Section 9.5.1]{dolgachev2012classical} an automorphism $g$ is a harmonic homology if and only if it lies in the conjugacy class $(4A_1)$ in the terminology of Cartier.

    According to \cite{hosoh1997automorphism} the group $\G$ is generated by three elements lying in the conjugacy class $(4A_1)$. By Lemma 11.4 of \cite{allcock2002complex} they correspond to reflections in some vectors of norm $2$  in $\La$. Let us denote them as $v_1,v_2,v_3$. So the reflections in these vectors generate a subgroup of the stabilizer of the point $z$ isomorphic to $\G$.

    Denote by $L$ the lattice generated by these vectors. It is enough to prove that $L\simeq D_3$. Let  $g_i$ be the reflection in the vector $v_i$. Since $(g_1g_2)^3=(g_1g_3)^3=(g_2g_3)^3=1$, all scalar products $(v_i,v_j),1\leq 1<j \leq 3$ are units in $\E$. Multiplying them by the sixth roots of unity and rearranging them, we can assume that  $(v_1,v_2)=1,(v_2,v_3)=1,(v_1,v_3)=a, (v_i,v_i)=2, 1\leq i\leq 3, a\in \{1,-1,w,-w\}$. There are four cases:
    \begin{enumerate}
        \item Let $a=-\w$. In this case $L\simeq D_3$ and the statement is proved.
        \item Let $a=\omega$. In this case the vector $v_3-v_2+v_1$ has norm $1$. This means that the point $z$ lies in the orthogonal complement to some vector of norm $1$ and so $z\notin\D'$.
        \item Let $a=-1$. In this case the vector $v_3-v_2+v_1$ has norm $0$. So this vector is equal to zero and the lattice $L$ is generated by the vectors $v_1$ and $v_2$. In this case it is easy to see that the group $W(L)$ is isomorphic to $S_3$.
        \item Let $a=1$. In this case the lattice $L$ isomorphic to the lattice $D_3(2)$. It is easy to see that in this case we have $W(L)\simeq S_3\rtimes (\Z/2\Z)^3$ and so $|W(L)|<|\G|$.

    \end{enumerate}
    \item[$1\Leftarrow 2$.] Since the group $W(D_3)$ is generated by reflections, the stabilizer of the point $z$ contains a subgroup isomorphic to $W(D_3)\simeq \G$. So it is enough to prove that $z\in \D'$. Let us assume that this is not true. Then there is some vector $v_4$ of norm $1$ such that the point $z$ lies in the orthogonal complement to the lattice generated by $L$ and $v_4$. Let us denote this lattice by $L'$. Since the lattice $L$ is primitively embedded in $\La$ and $v_4\notin L$, rank of the lattice $L'$ is equal to $4$. Choose the following basis in the lattice $L$: $v_1=(1,-1,0), v_2=(0,1,-1), v_3=(0,0,\tth)$. Since this lattice is positive definite, the number $\det ((v_i,v_j)),1\leq i,j\leq 4$ is positive. We set $(v_i,v_4)=a_i, 1\leq i\leq 4$. The direct computation shows that this determinant is equal to
    $$3-Q((a_1,a_2,a_3),(a_1,a_2,a_3))$$
    where $Q$ is the hermitian form with the following matrix:
    $$Q=\left(\begin{matrix}
6 &3 &\tth\\
3 & 6 &2\tth\\
-\tth & -2\tth & 3
\end{matrix}\right).$$
It is easy to see that the form $Q$ is positive definite. Since all the elements of this matrix lie in the ideal $\tth\E$, for any non-zero $(a_1,a_2,a_3)$ we have $Q((a_i),(a_i))\ge 3$. So the determinant is positive if and only if $a_1=a_2=a_3=0$. In this case $v_4\in L^{\perp}\simeq H$. But the lattice $H$ does not contain any vectors of norm $1$.
\end{itemize}
\end{proof}

We recall that the group $\PG$ is defined as $\PAut(\Lambda):=\Aut(\Lambda)/\mu_6$. Denote by $L$ the image of the lattice $D_3$ under some primitive embedding $D_3\to \Lambda$. The following proposition determines the structure of the normalizer of the group $W(L)$ inside the group $\PG$.
\begin{proposition}
\label{prop:normalizer}
Let us denote the image of the lattice $D_3$ under some primitive embedding $D_3\to\La$ by $L$ and by $M$ its orthogonal complement. The normalizer $N_{\GH}(W(L))$ is equal to the subgroup of the group $\Aut(L)\times \Aut(M)$ consisting of  elements having an extension to $\GH$. The natural projection $\Aut(L)\times \Aut(M)\to \Aut(M)$ gives the following exact sequence:
$$1\to\G\to N_{\GH}(W(L))\to\mu_3\times SL_2(\Z)\to 1.$$
In particular $N_{\PG}(W(L))/W(L)\simeq PSL_2(\Z)$.
\end{proposition}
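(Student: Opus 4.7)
The plan is to identify $N_\GH(W(L))$ first as the set of isometries of $\La$ that preserve $L$, then as the subgroup of $\Aut(L) \times \Aut(M)$ admitting an extension to $\La$, and finally to project onto $\Aut(M)$ to extract the exact sequence; passage to $\PG$ is then routine.

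For the first identification, since $g\sigma_v g^{-1} = \sigma_{gv}$ for any reflection $\sigma_v$ and any $g \in \GH$, the condition $g W(L) g^{-1} = W(L)$ is equivalent to $g$ sending norm-$2$ vectors of $L$ to unit multiples of norm-$2$ vectors of $L$, which in turn is equivalent to $g(L) = L$ provided $L$ is generated over $\E$ by its norm-$2$ vectors. The latter is an elementary calculation using $(1,-1,0)$, $(0,1,-1)$, $(1,-\w,0)$ to produce $(\tth,0,0)$ and $(0,\tth,0)$. Once $g(L) = L$, orthogonality of the hermitian form gives $g(M) = M$, and the restriction map $N_\GH(W(L)) \to \Aut(L) \times \Aut(M)$ is injective (as $L \oplus M$ has finite index in $\La$) with image consisting precisely of the pairs that extend to isometries of $\La$.

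For the second step, by the correspondence between overlattices of $L \oplus M$ and isotropic subspaces of $D(L \oplus M) \simeq V \oplus V(-1)$, the lattice $\La$ corresponds to an isotropic subspace $\wt W \subset V \oplus V(-1)$. From the proof of Proposition \ref{prop:primitive_embedding}, after replacing the embedding by one in the same $\GH$-orbit we may assume $\wt W$ is the diagonal $W_0 = \{(v,v) : v \in V\}$. The stabilizer of $W_0$ in $\Aut(V) \times \Aut(V(-1)) \simeq SL_2(\F_3) \times SL_2(\F_3)$ is exactly the diagonal subgroup, so a pair $(g_L, g_M)$ extends to $\La$ if and only if the induced actions $\bar g_L$ on $D(L)$ and $\bar g_M$ on $D(M)$ coincide in $SL_2(\F_3)$.

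Projecting onto $\Aut(M)$ then produces the asserted exact sequence: the kernel is $\{(g_L, 1) : \bar g_L = 1\}$, which by Lemma \ref{lemma:about_D3} equals $W(L) \simeq \G$; surjectivity onto $\Aut(M) \simeq \mu_3 \times SL_2(\Z)$ follows from the surjectivity of both $\Aut(L) \to SL_2(\F_3)$ (Lemma \ref{lemma:about_D3}) and $\Aut(M) \to SL_2(\F_3)$ (Lemma \ref{lemma:about_H}), which allows any $g_M$ to be paired with a matching $g_L$. To pass to $\PG = \GH/\mu_6$, observe that the central $\mu_6 \subset \GH$ projects to $\mu_3 \times \{\pm 1\} \subset \mu_3 \times SL_2(\Z)$, while $W(L) \cap \mu_6 = 1$ because elements of $W(L)$ restrict to the identity on $M$ whereas nontrivial scalars do not. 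Consequently the exact sequence descends to $1 \to \G \to N_{\PG}(W(L)) \to PSL_2(\Z) \to 1$ with $\G \simeq W(L)$ embedding, giving the stated quotient. The main technical point is the second step: matching up the isotropic subspace of $D(L \oplus M)$ with the diagonal and translating the extension condition into the coincidence of discriminant actions; everything else is bookkeeping on top of the two earlier lemmas.
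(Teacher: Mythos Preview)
Your proof is correct and follows essentially the same line as the paper's: identify the normalizer with the set of pairs $(g_L,g_M)\in\Aut(L)\times\Aut(M)$ whose discriminant actions coincide via the diagonal isotropic subspace $V'\subset D(L)\oplus D(M)$, then read off kernel and image from Lemmas~\ref{lemma:about_D3} and~\ref{lemma:about_H}. You supply a bit more detail than the paper (the fact that $L$ is spanned by its norm-$2$ vectors, and the explicit identification of where $\mu_6$ lands in $\mu_3\times SL_2(\Z)$), but the architecture is identical.
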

\begin{proof}
If an element of the group $\GH$ lies in the normalizer $N_{\GH}(W(L))$, then it preserves the set of vectors of the lattice $L$ of norm $2$. Hence it preserves the lattice $L$ and its orthogonal complement $M$. The first statement is proved. 

From the proof of Proposition \ref{prop:primitive_embedding} it follows that $D(L\oplus M)\simeq V(-1)\oplus V$ and the subspace corresponding to the extension $L\oplus M\subset \Lambda$ is equal to $V'=\{(v,v)|v\in V\}\subset V(-1)\oplus V$. Consider the following two maps:
$$a\colon \Aut(L)\to \Aut(D(L)), b\colon \Aut(M)\to \Aut(D(M)).$$

We have $\Aut(D(L))\simeq \Aut(D(M))\simeq SL_2(\F_3)$. It follows that the element $(g,h)\in \Aut(L)\times \Aut(M)$ has some extension to the group $\GH$ if and only if $a(g)=b(h)$. The kernel of the map $N_{\GH}(W(L))\to \Aut(M)$ coincides with the kernel of the map $a$. According to Lemma \ref{lemma:about_D3} this kernel is equal to $W(L)$. The image of the map $N_{\GH}(W(L))\to \Aut(M)$ is equal to the whole $\Aut(M)$ because the maps $a,b$ are surjective by Lemma \ref{lemma:about_H} and Lemma \ref{lemma:about_D3}.

The last statement follows from the fact that the restriction of the map $$N_{\PG}(W(L))\to \mu_3\times SL_2(\Z)$$ to the sixth roots of unity is injective.
\end{proof}

\section{Modular variety} Denote by $L_0\subset \La$ a lattice generated by the following vectors: $$w_1=(-1,\w,-1,0,-1), w_2=(0,0,1,-1,0),w_3=(0,1,-1,0,0).$$ and by $M_0$ its orthogonal complement in $\La$. It easy to see that the lattice $M_0$ is generated by the following vectors: $v_1=(-\tth,\w^2,\w^2,\w^2,0), v_2=(1,0,0,0,1)$. We recall that we have defined an isomorphism $j_0\colon \Hp\to (L_0)^{\perp}_{\mc D}=\D(M_0)$ given by the formula $j_0(\tau)=\tau v_1+v_2$. 

It is easy to see that the lattice $L_0$ is isomorphic to $D_3$ and primitively embedded in $\La$.  Denote by $X_{\ge W(L_0)}$ the set of points $z\in \D$ which are invariant under the group $W(L_0)$. Since any element in $X_{\ge W(L_0)}$ is orthogonal to any vector of $L_0$, it is equal to the intersection $M_0\otimes_\E\mb C\cap \D$. Denote by $X_{W(L_0)}$ the set of points $z\in \D$ whose stabilizer is equal to $W(L_0)$.  We have the inclusion $X_{W(L_0)}\subset X_{\ge W(L_0)}$ and we will prove that $X_{W(L_0)}$ is an open subset of $X_{\ge W(L_0)}$.

The map $j_0$ induces a biholomorphism $\Hp\to X_{\ge W(L_0)}$. Denote by $j_0^{-1}$ the inverse map $X_{\ge W(L_0)}\to \Hp$.

The group $\mu_3\times SL_2(\Z)$ acts on $\Hp$ via the natural homomorphism $$\mu_3\times SL_2(\Z)\to PSL_2(\Z)$$ and it acts on the space $X_{\ge W(L_0)}$ via the identification $\mu_3\times SL_2(\Z)=\Aut(M_0)$. It is easy to see that the map $j_0$ is equivariant under these actions.

Denote by $\Hp'$ the complement to orbits of the points $i$ and $\w$ under the group $PSL_2(\Z)$ and denote by $Stab_{\PG}(z)$ the stabilizer group of the point $z\in \D$ in the group $\PG$.

\begin{proposition}
\label{prop:half_plane}
For a point $z\in X_{\ge W(L_0)}$ we put $z'=j_0^{-1}(z)$. There are the following three possibilities:
\begin{enumerate}
\item The point $z'$ is conjugate to the point $\w$. In this case the group $Stab_{\PG}(z)$ has order $648$.
    \item The point $z'$ is conjugate to the point $i$. In this case the group $Stab_{\PG}(z)$ has order $108$.
    \item In the remaining case, the group $Stab_{\PG}(z)$ is isomorphic to $\G$. 
\end{enumerate}
In particular, the map $j_0$ induces a biholomorphism $\Hp'\to X_{W(L_0)}$.
\end{proposition}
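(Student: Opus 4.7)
\emph{Plan.}
My approach combines Proposition \ref{prop:characterisation} $(2 \Rightarrow 1)$, Proposition \ref{prop:normalizer}, and the classification of automorphism groups of smooth cubic surfaces. For any $z \in X_{\ge W(L_0)}$ the lattice $L_0 \simeq D_3$ is primitively embedded and orthogonal to $z$, so Proposition \ref{prop:characterisation} gives $z \in \D'$ and $W(L_0) \subseteq Stab_{\PG}(z) \simeq \Aut(S_z)$. The classification \cite{hosoh1997automorphism}, together with the paper's remark that $X_i$ and $W_\w$ are the only smooth cubic surfaces whose automorphism group has order $108$ and $648$, shows that any cubic-surface automorphism group containing $\G$ must be one of $\G$, $\Aut(X_i)$, or $\Aut(W_\w)$. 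Hence $|Stab_{\PG}(z)| \in \{54,108,648\}$.

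Next I would invoke Proposition \ref{prop:normalizer}: it identifies $N_{\PG}(W(L_0))/W(L_0)$ with $PSL_2(\Z)$, and $j_0$ intertwines this action with the standard action on $\Hp$. Therefore
$$|Stab_{\PG}(z) \cap N_{\PG}(W(L_0))| = 54 \cdot |Stab_{PSL_2(\Z)}(z')|,$$
which equals $54$, $108$, or $162$ as $z'$ is generic, conjugate to $i$, or conjugate to $\omega$ under $PSL_2(\Z)$.

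When $z' \sim \omega$ the left-hand side is $162$, which divides neither $54$ nor $108$, so $|Stab_{\PG}(z)| = 648$, giving case (1). For the remaining cases, I would use that $X_i$ and $W_\w$ each represent a single $\PG$-orbit in $\D'$, so the $108$- and $648$-loci inside $X_{\ge W(L_0)}$ are each a single $\PG$-orbit intersected with $X_{\ge W(L_0)}$. The key reduction is: if $z_1 = g z_0$ with $z_0,z_1 \in X_{\ge W(L_0)}$, then $W(L_0)$ and $g^{-1} W(L_0) g$ are two $\G$-subgroups of $Stab_{\PG}(z_0)$, and any $h \in Stab_{\PG}(z_0)$ conjugating one to the other yields $hg^{-1} \in N_{\PG}(W(L_0))$ which already sends $z_0$ to $z_1$. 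In $\Aut(X_i)$ the subgroup $\G$ has index $2$ and is therefore the unique $\G$-subgroup, so the $108$-locus is a single $N_{\PG}(W(L_0))$-orbit; granting the analogous fact for $\Aut(W_\w)$, the same holds for the $648$-locus. Case (1) identifies the $648$-orbit with the $N_{\PG}(W(L_0))$-orbit of $j_0(\omega)$; then normality of $\G$ in $\Aut(X_i)$ forces $|Stab| = 108$ to imply $Stab \subseteq N_{\PG}(W(L_0))$ and hence, by the formula above, $z' \sim i$. Thus the $108$-orbit is the $N_{\PG}(W(L_0))$-orbit of $j_0(i)$, giving case (2); case (3) follows by elimination, since a generic $z'$ is conjugate to neither $i$ nor $\omega$. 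The biholomorphism $\Hp' \to X_{W(L_0)}$ is then formal, as $X_{W(L_0)}$ is precisely the $54$-locus and its $j_0$-preimage is $\Hp \setminus PSL_2(\Z)\cdot\{i,\omega\} = \Hp'$.

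\emph{Main obstacle.} The sole non-routine ingredient is the group-theoretic statement that subgroups of $\Aut(W_\w)$ isomorphic to $\G$ form a single conjugacy class; the analogous claim for $\Aut(X_i)$ is automatic since $\G$ has index $2$. This should follow from the explicit structure of the order-$648$ automorphism group of $W_\w$ (a Hessian-type reflection group, cf.\ Shephard--Todd), but it is the only step not supplied directly by the propositions above.
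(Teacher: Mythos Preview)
Your overall strategy---use Proposition~\ref{prop:characterisation} to place $z$ in $\D'$ with $W(L_0)\subseteq Stab_{\PG}(z)$, invoke the classification to force $|Stab_{\PG}(z)|\in\{54,108,648\}$, and then read off the case via Proposition~\ref{prop:normalizer} and the formula $|Stab_{\PG}(z)\cap N_{\PG}(W(L_0))|=54\cdot|Stab_{PSL_2(\Z)}(z')|$---is sound, and the implications $z'\sim\omega\Rightarrow|Stab|=648$ and $|Stab|=108\Rightarrow z'\sim i$ are handled correctly (the latter exactly as in the paper, via normality of the index-$2$ subgroup). The gap you identify is genuine: to close the biconditionals you need either that the $648$-locus in $X_{\ge W(L_0)}$ is a single $N_{\PG}(W(L_0))$-orbit, or equivalently that all copies of $\G$ inside the order-$648$ group are conjugate, and you have not supplied this.

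The paper sidesteps this obstacle entirely by replacing your group-theoretic single-orbit argument with a lattice-theoretic one. It quotes from \cite[Theorem~11.9]{allcock2002complex} the characterization that $|Stab_{\PG}(z)|=648$ if and only if $z$ is proportional to a vector of $\Lambda$ of norm $-3$; since $M_0$ is primitive in $\Lambda$, any such vector already lies in $M_0$. One then checks directly that $\omega v_1+v_2$ has norm $-3$, and invokes item~(4) of Lemma~\ref{lemma:about_H} (transitivity of $\Aut(H)\cong\Aut(M_0)$ on norm $-3$ vectors) to conclude that the $648$-locus in $X_{\ge W(L_0)}$ is exactly the $N_{\PG}(W(L_0))$-orbit of $j_0(\omega)$. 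For the converse of case~(2) the paper simply observes that $j_0(i)=iv_1+v_2$ is not proportional to any lattice vector (since $i\notin\Q(\omega)$), hence $|Stab|\neq 648$. Thus the norm~$-3$ criterion does all the work your conjugacy claim was meant to do, and Lemma~\ref{lemma:about_H}(4) was proved precisely for this purpose; you should use it rather than attempt the Shephard--Todd computation.
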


\begin{proof}
According to the classification from \cite{hosoh1997automorphism} there are three cases:
\begin{enumerate}
\item $|G_z|=648$. According to the classification from \cite{hosoh1997automorphism} and the proof of Theorem 11.9 from \cite{allcock2002complex}, this is equivalent to the statement that the vector $z$ is proportional to some vector $\wt z$ lying in $\La$ and having norm $-3$. Since the lattice $M_0$ is primitively embedded in $\La$ it follows that $\wt z\in M_0$. It is easy to see that the vector $\omega v_1+v_2$ has norm $-3$. So it is enough to prove that the group $\Aut(M_0)$ acts transitively on the vectors of norm $-3$. This statement follows from the fourth statement of Lemma \ref{lemma:about_H} (we recall that $M_0\cong H$). 

    \item $|G_z|=108$.  In this case the elements of the group $G_{z}$ lying in the conjugacy class $(4A_1)$ generate a subgroup isomorphic to $\G$. Since this subgroup is normal, the group $G_z$ is contained in the group $N_{\PG}(W(L))$. So the stabilizer of the point $z$ in $N_{\PG}(W(L))$ has $108$ elements. According to \cite[Section 6.4. (10)]{hosoh1997automorphism} the group $Stab_{\PG}(z)$ is generated by $\G$ and a certain element $h$ satisfying $h\not\in G, h^4=1, h^2\in\G$. This means that the point $z'$ is invariant under some element of the group $PSL_2(\Z)$ of order $2$. So the point $z'$ is conjugate to $i$.
    
    Let us assume that the point $z'$ is conjugate to $i$. Then the point $z'$ is fixed under some non-trivial element from the group $PSL_2(\Z)$. So the stabilizer of the point $z$ in $\PG$ is strictly greater than $\G$. The point $z$ is not proportional to any element from $\Lambda$. By (1) of this proof, it follows that  $|Stab_{\PG}(z)|\ne 648$. So because of the classification from \cite{hosoh1997automorphism} the order of the group $Stab_{\PG}(z)$ is equal to $108$.
    
    \item $|G_z|=54$. According to the two previous cases, the point $z'$ lies in $\Hp'$.
\end{enumerate}
\end{proof}
We recall that the cubic surface $X_{\tau}$ was defined in the second section. We have the following statement:

\begin{lemma}
\label{lemma:Hasse}
There are the following three possibilities:
\begin{enumerate}
    \item If the point $\tau$ is conjugate to the point $\w$ then $|Aut(X_\tau)|=648$.
    \item If the point $\tau$ is conjugate to the point $i$ then $|Aut(X_\tau)|=108$.
    \item In the remaining case $Aut(X_\tau)\simeq \G$.
\end{enumerate}
\end{lemma}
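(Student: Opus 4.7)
The plan is to combine the period-theoretic reduction (Corollary \ref{cor:characterisation} together with Propositions \ref{prop:primitive_embedding}, \ref{prop:normalizer}, \ref{prop:half_plane}) with direct computations of the automorphism groups at the two special fibres $X_\omega$ and $X_i$. The starting point is that $X_\tau$ already carries an action of $\G$ for every $\tau$, so the period point $\mc P(X_\tau)$ lies in $\PG\lf X_{\ge W(L_0)}$, and Proposition \ref{prop:half_plane} reduces the computation of $|\Aut(X_\tau)|$ to locating a class $\tilde\tau\in PSL_2(\Z)\lf\Hp$.

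First I would show $\G\subseteq\Aut(X_\tau)$ for every $\tau\in\Hp$. The projection $\mb P^3\dashrightarrow\mb P^2$ forgetting $s$ realises $X_\tau$ as the cyclic triple cover of $\mb P^2$ branched along $E_\tau=\{P_\tau=0\}$. Every smooth plane cubic is projectively equivalent to a member of the Hesse pencil $x^3+y^3+z^3+\lambda xyz=0$, and this transformation extends to $\mb P^3$, taking $X_\tau$ to $\{s^3+x^3+y^3+z^3+\lambda(\tau)xyz=0\}$. The group $\G=G(3,3,3)$ acts on this surface trivially on $s$ and as $G(3,3,3)$ on $(x,y,z)$, so $\G\subseteq\Aut(X_\tau)$; the scalar matrix $\omega I_3\oplus 1\in\G$ agrees in $PGL_4$ with the Galois $\mu_3$ of the triple cover, so this copy of $\G$ has exactly $54$ elements and contains the triple-cover Galois action as its centre.

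By Corollary \ref{cor:characterisation} and Proposition \ref{prop:primitive_embedding}, I would then represent $\mc P(X_\tau)$ by a point $z\in X_{\ge W(L_0)}$ and set $\tilde\tau:=j_0^{-1}(z)$; Proposition \ref{prop:normalizer} makes the resulting class in $PSL_2(\Z)\lf\Hp$ well defined. Proposition \ref{prop:half_plane} then identifies $|\Aut(X_\tau)|=|Stab_{\PG}(z)|$ with $648$, $108$, or $54$ according as $\tilde\tau$ is $PSL_2(\Z)$-equivalent to $\omega$, to $i$, or to neither. It thus suffices to pin down $\tilde\tau$ at $\tau=\omega,i$ and to show that $\tau\mapsto\tilde\tau$ is injective on $PSL_2(\Z)$-orbits.

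For the special values I would argue directly: $g_2(\omega)=0$ gives $j(E_\omega)=0$, so $X_\omega$ is the Fermat cubic surface, classically of automorphism order $648$; $g_3(i)=0$ allows the non-$\G$ transformation $(x,y,z,s)\mapsto(-x,iy,z,-s)$ of $X_i$, pushing $|\Aut(X_i)|$ above $54$, and Hosoh's classification \cite{hosoh1997automorphism} then forces $|\Aut(X_i)|=108$. For the injectivity, two Weierstrass forms with the same $j$-invariant differ by $(x,y)\mapsto(u^2x,u^3y)$, so $X_\tau\cong X_{\tau'}$ whenever $j(\tau)=j(\tau')$; conversely, when $\Aut(X_\tau)=\G$ the centre of $\G$ is a canonical $\mu_3\subseteq\Aut(X_\tau)$ whose quotient recovers $(\mb P^2,E_\tau)$, so any isomorphism $X_\tau\cong X_{\tau'}$ descends to an isomorphism $(\mb P^2,E_\tau)\simeq(\mb P^2,E_{\tau'})$ and forces $j(\tau)=j(\tau')$. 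The induced map on $PSL_2(\Z)\lf\Hp$ is then an injective holomorphic self-map that carries the orbits of $\omega,i$ to themselves, and by injectivity nothing else is sent to these two orbits, giving $|\Aut(X_\tau)|=54$ in all remaining cases. The hardest step will be this last injectivity: one must carefully handle the ``near-special'' $\tau$ where the centre of $\Aut(X_\tau)$ could a priori grow, and verify that the intrinsic structure of the cubic surface really does determine $j(\tau)$.
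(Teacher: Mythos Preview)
Your route is much more circuitous than the paper's and carries an acknowledged gap that you do not close.

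The paper's argument is entirely elementary and uses no period theory. It writes $E_\tau$ in Hesse form $E'_\lambda:\ x^3+y^3+z^3+6\lambda xyz=0$, so that $X_\tau\cong X'_\lambda:\ s^3+x^3+y^3+z^3+6\lambda xyz=0$, and then quotes Dolgachev's classification \cite[Theorem~9.5.8]{dolgachev2012classical}: $|\Aut(X'_\lambda)|=648$ iff $\lambda=\lambda^4$, $|\Aut(X'_\lambda)|=108$ iff $1-20\lambda^3-8\lambda^6=0$, and otherwise $\Aut(X'_\lambda)\cong\G$. Since the $j$-invariant of $E'_\lambda$ is $1728\cdot 4A^3/(4A^3+27B^2)$ with $A=12\lambda(1-\lambda^3)$ and $B=2(1-20\lambda^3-8\lambda^6)$, these three cases translate immediately into $j=0$, $j=1728$, and $j\notin\{0,1728\}$, i.e.\ into $\tau\sim\omega$, $\tau\sim i$, and the remaining case. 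That is the whole proof.

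Your proposal instead feeds $X_\tau$ through Corollary~\ref{cor:characterisation} and Propositions~\ref{prop:primitive_embedding}--\ref{prop:half_plane} to produce a class $\tilde\tau\in PSL_2(\Z)\backslash\Hp$, checks $\tilde\omega\sim\omega$ and $\tilde i\sim i$ by hand, and then wants to conclude via injectivity of $\tau\mapsto\tilde\tau$ that no other $\tau$ lands on those two orbits. The injectivity step is precisely where your argument breaks: you justify it by saying that when $\Aut(X_\tau)=\G$ the centre of $\G$ is a \emph{canonical} $\mu_3$ whose quotient recovers $(\mb P^2,E_\tau)$, but ``$\Aut(X_\tau)=\G$'' is the conclusion you are trying to reach. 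At a putative $\tau\not\sim\omega,i$ with $X_\tau\cong X_\omega$ or $X_\tau\cong X_i$, the automorphism group is larger, the copy of $\mu_3$ you want is no longer characteristic, and nothing you have written rules out such $\tau$. You flag this yourself as ``the hardest step'' but do not resolve it; the most natural way to resolve it is to compute $|\Aut(X'_\lambda)|$ directly in terms of $\lambda$ --- which is exactly the paper's argument, rendering the period-theoretic detour superfluous. Note also that in the paper's logical order Lemma~\ref{lemma:Hasse} is an \emph{input} to Theorem~\ref{th:explicit_formula}, whereas your plan essentially tries to establish (a piece of) Theorem~\ref{th:explicit_formula} first; while not formally circular, this inverts the intended dependency and leaves you re-proving the harder statement to get the easier one.
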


\begin{proof}
Any cubic curve is projectively equivalent to a cubic curve in the so-called Hesse form \cite[Lemma 1]{dolgachevHesse}. This means that it is equivalent to the curve $E'_{\lambda}$, where the curve $E'_{\lambda}$ is given by the equation $x^3+y^3+z^3+6\lambda xyz=0$ in $\mb P^2$. According to the proof of Lemma $1$ from \cite{dolgachevHesse}, the $j$ invariant of the curve $E'_{\lambda}$ is equal to $1728\cdot 4A^3/(4A^3+27B^2)$ where $A=12\lambda(1-\lambda^3), B=2(1-20\lambda^3-8\lambda^6))$. Let us denote by $X'_{\lambda}$ a cubic surface given by the equation $s^3+x^3+y^3+z^3+6\lambda xyz=0$ in $\mathbb P^3$. According to \cite[Theorem 9.5.8]{dolgachev2012classical} the smooth cubic surface $X'_{\lambda}$ has the automorphism group of order $108$ if and only if $1-20\lambda^3-8\lambda^6=0$, has the automorphism group of order $648$ if and only if $\lambda=\lambda^4$ and in the remaining case, it has the automorphism group isomorphic to $G$.  Let us consider the following three cases:

\begin{enumerate}
    
 \item The point $\tau$ is conjugate to $\omega$. In this case $j$-invariant of the cubic curve $E_\tau$ is equal to $0$. So if we write this curve in Hesse form we would have $A=0$ and hence $\lambda=\lambda^4$.  The curves $E_\tau, E_\lambda'$ are projectively equivalent. It follows that $X_\tau\cong X_\lambda'$. So it follows from \cite{dolgachev2012classical} that  $|\Aut(X_\tau)|=648$.
 
  \item The point $\tau$ is conjugate to $i$. In this case $j$-invariant of the cubic curve $E_\tau$ is equal to $1728$. So if we write this curve in Hesse form we would have $B=0$ and hence $1-20\lambda^3-8\lambda^6=0$.  As in the previous case we conclude that $|\Aut(X_\tau)|=108$.

        \item The remaining case. In this case $j$-invariant of the cubic curve $E_\tau$ is not equal to $0$ or $1728$. So if we write this curve in Hesse form we would have $A, B\ne 0$. So it follows from \cite{dolgachev2012classical} that $\Aut(X_\tau)\cong G$.
\end{enumerate}
\end{proof}

\begin{lemma}
\label{lemma:connectedness}The following statements hold
\begin{enumerate}
 \item For any $x,y\in X_{W(L_0)}$ if some $\gamma\in \PG$ satisfies $\gamma x=y$ then $\gamma\in N_{\PG}(W(L_0))$.
 \item If $g_1, g_2$ are two different elements in $\PG/N_{\PG}(W(L_0))$ then the two sets $g_1X_{W(L_0)}, g_2X_{W(L_0)}$ do not intersect.
    \item Let us consider a subset $\D'_\G=\bigcup\limits_{g\in \PG/N_{\PG}(W(L_0))}gX_{W(L_0)}$ of $\D$ with the induced topology. For each $g_0\in\PG/N_{\PG}(W(L_0))$ the subset $g_0X_{W(L_0)}$ is open and closed in $\D'_\G$.
\end{enumerate}
\end{lemma}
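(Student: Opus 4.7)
The plan is to dispatch parts (1) and (2) by pure group theory, then use an upper-semicontinuity property of stabilizers (a consequence of proper discontinuity of the $\PG$-action on $\D$) to obtain the topological claim in part (3).

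For part (1), if $\gamma \in \PG$ sends $x \in X_{W(L_0)}$ to $y \in X_{W(L_0)}$, then conjugation transports the stabilizer of $x$ to the stabilizer of $y$, i.e.\ $\gamma\,\mathrm{Stab}_{\PG}(x)\,\gamma^{-1} = \mathrm{Stab}_{\PG}(y)$. By definition of $X_{W(L_0)}$ both stabilizers equal $W(L_0)$ (as subgroups of $\PG$, not merely as abstract groups), so $\gamma W(L_0)\gamma^{-1}=W(L_0)$ and $\gamma\in N_{\PG}(W(L_0))$. Part (2) is then immediate: if $g_1X_{W(L_0)}\cap g_2 X_{W(L_0)}\neq\emptyset$, choose $x_1,x_2\in X_{W(L_0)}$ with $g_1x_1=g_2x_2$; then $g_2^{-1}g_1 \in N_{\PG}(W(L_0))$ by (1), contradicting $g_1\not\equiv g_2$ modulo the normalizer.

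For part (3) I first prove openness. The group $\PG$ acts properly discontinuously on $\D$ with finite stabilizers (this is standard for the integral unitary group on complex hyperbolic space and is what makes $\M$ a complex orbifold in the first place). In particular, every $z\in \D$ admits an open neighborhood $U\subset \D$ such that $\gamma U\cap U\neq\emptyset$ implies $\gamma\in\mathrm{Stab}_{\PG}(z)$; equivalently, $\mathrm{Stab}_{\PG}(z')\subseteq \mathrm{Stab}_{\PG}(z)$ for every $z'\in U$. Apply this with $z\in g_0 X_{W(L_0)}$, so $\mathrm{Stab}_{\PG}(z)=g_0W(L_0)g_0^{-1}$, which has order $54$. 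For any $z'\in U\cap \D'_\G$ the stabilizer is isomorphic to $\G$, hence of order $54$, and it is contained in $g_0W(L_0)g_0^{-1}$, so the two subgroups coincide. Consequently $z'\in g_0X_{W(L_0)}$, showing that $U\cap \D'_\G\subseteq g_0X_{W(L_0)}$ and that $g_0X_{W(L_0)}$ is open in $\D'_\G$.

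Closedness is now formal. By part (2) the decomposition
\[
\D'_\G = \bigsqcup_{g\in \PG/N_{\PG}(W(L_0))} g X_{W(L_0)}
\]
is disjoint, so the complement of $g_0 X_{W(L_0)}$ in $\D'_\G$ is the union of the remaining pieces $g X_{W(L_0)}$, each of which is open by the previous paragraph; hence this complement is open and $g_0X_{W(L_0)}$ is closed in $\D'_\G$. The only nontrivial ingredient is the upper-semicontinuity of the stabilizer function, which is the main obstacle if one has not previously recorded proper discontinuity of $\PG\curvearrowright\D$; once that is invoked everything else is automatic.
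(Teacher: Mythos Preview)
Your proof is correct. Parts (1) and (2) coincide with the paper's argument essentially verbatim. In part (3) you take a genuinely different route. The paper first proves closedness directly, observing that $g_0X_{\ge W(L_0)}$ is closed in $\D$ and that $g_0X_{\ge W(L_0)}\cap\D'_\G=g_0X_{W(L_0)}$; it then obtains openness by invoking local finiteness of the family of reflection mirrors $\{\delta_v\}_{(v,v)=2}$ (with a citation to Beauville), which forces the union $\bigcup_{g\neq g_0}gX_{W(L_0)}$ to be closed. You instead derive openness from upper semicontinuity of stabilizers---a direct consequence of proper discontinuity of $\PG$ on $\D$---together with the order count $|\mathrm{Stab}_{\PG}(z')|=54=|g_0W(L_0)g_0^{-1}|$, and then get closedness formally from disjointness and openness of every piece. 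Your approach avoids the external reference and is arguably cleaner; the paper's approach makes the hyperplane geometry explicit. Both ultimately rest on the same discreteness of $\PG$, expressed either as proper discontinuity or as local finiteness of mirrors.
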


\begin{proof}
\begin{enumerate}
\item Let $g\in W(L_0)$. Since $gy=y$ we have
    $$\gamma^{-1} g \gamma x=\gamma^{-1} g y=\gamma ^{-1}y=x.$$
    Since the stabilizer of the point $x$ is equal to $W(L_0)$ it follows that $\gamma^{-1}g\gamma\in W(L_0)$. Since it is true for any $g\in W(L_0)$ we have $\gamma\in N_{\PG}(W(L_0))$.
    \item If they did intersect there would be points $x,y\in X_{W(L_0)}$ and elements $g_1,g_2\in \PG$ such that $g_1x=g_2y$. So $x=g_1^{-1}g_2y$. According to the first statement of this lemma the element $g_1^{-1}g_2$ lies in the subgroup $N_{\PG}(W(L_0))$. This means that $g_2=g_1w,w\in N_{\PG}(W(L_0))$ and so $g_1N_{\PG}(W(L_0))=g_2N_{\PG}(W(L_0))$. A contradiction.
\item Since the set $g_0X_{\ge W(L_0)}$ is closed in $\D$, the closure of the set $g_0X_{W(L_0)}$ in $\D'_\G$ is contained in the set $g_0X_{\ge W(L_0)}\cap \D'_\G=g_0X_{W(L_0)}$. So the set $g_0X_{W(L_0)}$  is closed in $\D'_\G$.

For a vector $v\in\Lambda$ of norm $2$ denote by $\delta_v$ the set of points $z\in\D$ which are orthogonal to $v$. Similarly to the proof of Lemma 5.3 from \cite{beauville2009moduli} one can show that the family of subsets $\{\delta_v\}_{(v,v)=2}$ is locally finite. Since the set $gX_{\ge W(L_0)}$ is an intersection of a finite number of sets of the form $\delta_v$, the family of subsets $\{gX_{\ge W(L_0)}\}_{ g_0\ne g\in \PG/W(L)}$ is locally finite too. So the set $$Y=\bigcup\limits_{g_0\ne g\in \PG/N_{\PG}(W(L_0))}gX_{W(L_0)}$$ is closed in $\D'_\G$. Therefore the complement $\D'_\G\bs Y$ is open. By the previous statement of the lemma this complement coincides with $g_0X_{W(L_0)}$.
\end{enumerate}
\end{proof}

\begin{proof}[The proof of Theorem \ref{th:about_bijection}]
Since the map $j_0\colon \Hp'\to \D'_\G$ can be decomposed as $\Hp'\to X_{W(L_0)}\to \D'_\G$ it is enough to prove that the following two maps are well defined and bijective:
\begin{enumerate}
    \item The map $f_1\colon \PSL\lf \Hp'\to N_{\PG}(W(L_0))\lf X_{W(L_0)}$. If two points $\tau_1,\tau_2$ are equivalent under $PSL_2(\Z)$, the points $j_0(\tau_1), j_0(\tau_2)$ are equivalent under the group $\PAut(M_0)\simeq PSL_2(\Z)$. By Proposition \ref{prop:normalizer} the map $N_{\PG}(W(L_0))\to \Aut(M_0)$ is surjective. So the points $j_0(\tau_1), j_0(\tau_2)$ are equivalent under the group $N_{\PG}(W(L_0))$. We have proved that the map $f_1$ is well-defined and injective. By Proposition \ref{prop:half_plane} the map $\Hp'\to X_{W(L_0)}$ is bijective. So the map $f_1$ is surjective.
    \item The map $f_2\colon N_{\PG}(W(L_0))\lf X_{W(L_0)}\to \PG\lf\D'_\G$. Obviously, this map is well defined. It is injective by the first statement of Lemma \ref{lemma:connectedness}. By Proposition \ref{prop:characterisation}, Proposition \ref{prop:primitive_embedding} and Proposition \ref{prop:half_plane} it is surjective.
\end{enumerate}
\end{proof}

The following lemma is well-known.

\begin{lemma}
\label{lemma:extension}
Let $f$ be an injective holomorphic map from $\mathbb P^1\bs\{0,1,\infty\}$ to itself. Then $f$ is a linear fractional transformation.
\end{lemma}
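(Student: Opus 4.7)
My plan is to show that $f$ extends to a holomorphic self-map $\bar f$ of $\mb P^1$ and that $\deg\bar f=1$, from which the conclusion is immediate since a degree-one holomorphic self-map of $\mb P^1$ is a Möbius transformation.

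First I would extend $f$ across each puncture. Fix $p\in\{0,1,\infty\}$ and view $f$ locally as a meromorphic function on a punctured disc around $p$. Since $f$ takes values in $\mb P^1\setminus\{0,1,\infty\}$, on such a punctured neighbourhood $f$ omits the two finite complex values $0$ and $1$. Picard's Great Theorem then forbids an essential singularity at $p$: at such a singularity, every value in $\mb C$ would be attained with at most one exception in every punctured neighbourhood. Hence $p$ is either a removable singularity or a pole of $f$, so $f$ extends to a holomorphic map $\bar f\colon \mb P^1\to\mb P^1$.

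Next I would count the degree of $\bar f$. The map $\bar f$ is nonconstant because $f$ is already injective on an infinite set, so $\deg\bar f\ge 1$. Conversely, pick a generic $q\in\mb P^1\setminus\{0,1,\infty\}$, avoiding the finite set $\{\bar f(0),\bar f(1),\bar f(\infty)\}$ together with the critical values of $\bar f$. Then $\bar f^{-1}(q)\subset \mb P^1\setminus\{0,1,\infty\}$, and by injectivity of $f$ on $\mb P^1\setminus\{0,1,\infty\}$ this preimage contains at most one point. Hence $\deg\bar f=1$ and $\bar f$ is a linear fractional transformation; restricting back gives the statement for $f$.

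The only subtle point in the argument is the invocation of Picard's Great Theorem to exclude essential singularities at the three punctures, and this is a textbook application since $f$ omits two finite values near each puncture. Everything else is a straightforward degree count. (One could alternatively lift $f$ to the universal cover $\Hp\to\mb P^1\setminus\{0,1,\infty\}$ and use rigidity of equivariant maps, but the Picard route is the most direct.)
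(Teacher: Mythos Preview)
Your proof is correct and follows the same overall plan as the paper: rule out essential singularities at the three punctures, extend to a rational self-map of $\mb P^1$, and use injectivity to force degree one.

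The only real difference is in how the essential singularities are excluded. You invoke Picard's Great Theorem, using that $f$ omits the two values $0$ and $1$; the paper instead uses the weaker Casorati--Weierstrass theorem together with the injectivity of $f$. Concretely, the paper observes (via the open mapping theorem) that $f$ is a homeomorphism onto its image $Y$, and then argues that an essential singularity at a puncture would make the image of a small punctured disc dense in $Y$, forcing that disc to be dense in $\mb P^1\setminus\{0,1,\infty\}$ --- a contradiction. Your route is shorter and uses the omitted values rather than injectivity at this step; the paper's route is more elementary in that it avoids Picard. After the extension, both arguments finish identically by noting that an injective rational map has degree one.
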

\begin{proof}
Let us denote by $Y$ the image of $f$. By the open mapping theorem, $f$ induces a homeomorphism from $\mathbb P^1\bs\{0,1,\infty\}$ to $Y$.  Let us prove that this function has poles at the points $0,1,\infty$. By contradiction, let us assume that the function $f$ had an essential singularity at, say, the point $0$. Let us choose some open bounded neighborhood $U$ of the point $0$. Let $U'=U\bs (U\cap\{0,1\})$. By the Casorati-Weierstrass theorem the image $W:=f(U')$ is dense in $\mathbb P^1\bs\{0,1,\infty\}$. So the set $W$ is dense in $Y$ as well. Since the map $f\colon \mathbb P^1\bs\{0,1,\infty\} \to Y$ is a homeomorphism, we conclude that $U'$ is dense in $\mathbb P^1\bs\{0,1,\infty\}$. A contradiction. This means that $f$ is a rational function. Since the function $f$ is injective, $f$ is a linear fractional transformation.
\end{proof}

\begin{proof}[Proof of Theorem \ref{th:explicit_formula}]The map $\mc P_q'$ satisfies the following equation: $\mc P_{gq}'(\tau)=g\mc P_{q}'(\tau)$. In particular, it is enough to prove the statement of the theorem for some specific $q$. So we can assume that for some $\tau\in\Hp'$ the point $\mc P_q'(\tau)$ lies in $X_{W(L_0)}$. According to Lemma \ref{lemma:Hasse}, the map $\mc P_q'$ induces a map $\mc P_q'|_{\Hp'}\colon \Hp'\to \D'_\G$. 
Since the set $\Hp\bs \Hp'$ is discrete in $\Hp$ the set $\Hp'$ is path-connected. So, according to the third statement of Lemma \ref{lemma:connectedness} the image of $\Hp'$ under the map $\mc P_q'|_{\Hp'}$ is contained in the set $X_{W(L_0)}$.  Since $X_{\ge W(L_0)}$ is closed and $\Hp'$ is dense in $\Hp$ the image of $\mc P_q'$ is contained in $X_{\ge W(L_0)}$. 

Denote $f:=j_0^{-1} \circ\mc P_q'$. Let us prove that the map $f$ has the form $f(\tau)=\dfrac {a\tau+b}{c\tau+d}$ for some $a,b,c,d\in \Z, ad-bc=1$.

 If two points $\tau_1,\tau_2$ are equivalent under the group $\Hp'$, the cubic surfaces $X_{\tau_1}, X_{\tau_2}$ are isomorphic. It implies that the points $\mc P_q'(\tau_1), \mc P_q'(\tau_1)$ are equivalent under the group $N_{\PG}(W(L_0))$ and therefore the points $f(\tau_1), f(\tau_2)$ are equivalent under the group $PSL_2(\Z)$. So the map $f$ induces a well-defined injective map $$\bar f\colon \PSL\lf\Hp'\to \PSL\lf\Hp'.$$ Let us identify the space $\PSL\lf\Hp'$ with $\mb P^1\bs\{0,1,\infty\}$ using the $j$-invariant. 
 By Lemma \ref{lemma:extension}, the map $\bar f$ is a linear fractional transformation. Denote by $\wt f$ its extension to $\mathbb P^1$. We have $\wt f(\mb P^1\bs\{0,1,\infty\})\subset \mb P^1\bs\{0,1,\infty\}$. By Proposition \ref{prop:half_plane} and Lemma \ref{lemma:Hasse} the map $f$ preserves the orbits of the points $i$ and $\w$. So $\wt f(0)=0, \wt f(1)=1$. It follows that the map $\wt f$ is identical.

The map $j|_{\Hp'}\colon \Hp'\to \mb P^1\bs\{0,1,\infty\}$ is a topological covering. It follows that the map $f|_{\Hp'}$ lies in $PSL_2(\Z)$. Therefore the map $f$ also lies in $PSL_2(\Z)$. It follows that the map $\mc P_q'$ is given by the formula $\mc P_q'(\tau)=g_1(j_0(g_2(\tau)))$, for some $g_1\in \PG, g_2\in PSL_2(\Z)$. Since any automorphism of the lattice $M_0$ can be extended to an automorphism of $\La$, the theorem is proved.
\end{proof}

\begin{acknowledgements}
I thank Valery
Gritsenko for his interest in this paper. I also very grateful to the referee who made a lot of useful remarks.
\end{acknowledgements}

\bibliographystyle{spmpsci.bst}      
\bibliography{mylib}   

%
%

\end{document}